\newtheorem{thm}{Theorem}[section] 
\newtheorem{mythm}[thm]{Theorem}
\newtheorem{mycor}[thm]{Corollary}
\newtheorem{mylemma}[thm]{Lemma}
\newtheorem{myprop}[thm]{Proposition}
\newtheorem{myques}[thm]{Question}
\newtheorem{myconj}[thm]{Conjecture}
\newtheorem{myobs}[thm]{Observation}
\newtheoremstyle{TheoremNum}
    {\topsep}{\topsep}              
    {\itshape}                      
    {}                              
    {\bfseries}                     
    {.}                             
    { }                             
    {\thmname{#1}\thmnote{ \bfseries #3}}
\theoremstyle{TheoremNum}
\newtheorem{repthm}{Theorem} 
\DeclareMathOperator{\lexp}{\bullet}
\title{\bf Multi-Colouring of Kneser Graphs:\\[1mm]
Notes on Stahl's Conjecture\,\footnote{The research in this publication was first published in the second author's PhD Thesis~\cite{xuPhD}.}}
\author{Jan van den Heuvel\qquad and\qquad Xinyi Xu\\[3mm]
    Department of Mathematics\\[0.5mm]
    London School of Economics \& Political Science\\[0.5mm]
    Houghton Street, London WC2A 2AE, UK\\[3mm]
    j.van-den-heuvel@lse.ac.uk\qquad xinyi.xu97@gmail.com}
\begin{document}
\maketitle

\begin{abstract}
\noindent
A (finite, undirected) graph is \emph{$(n,k)$-colourable} if we can assign each vertex a $k$-subset of $\{1,2,\ldots,n\}$ so that adjacent vertices receive disjoint subsets.
We consider the following problem: if a graph is $(n,k)$-colourable, then for what pairs $(n', k')$ is it also $(n',k')$-colourable?
This question can be translated into a question regarding multi-colourings of Kneser graphs, for which Stahl formulated a conjecture in 1976.
We present new results, strengthen existing results, and in particular present much simpler proofs of several known cases of the conjecture.
\end{abstract}
\section{Introduction and Main Results}\label{multi_sec:intro}

All graphs in this note are finite, undirected and without multiple edges or loops.
Our terminology and notation is standard, and can be find in any textbook on graph theory such as Diestel~\cite{diestelgraph}.

A \emph{proper} colouring of a graph assigns a colour to each vertex such that adjacent vertices receive different colours.
A graph $G$ is \emph{$n$-colourable} if~$n$ colours are enough for a proper colouring of~$G$, and the \emph{chromatic number $\chi(G)$} is the smallest~$n$ for which $G$ is $n$-colourable.

\emph{Multi-colouring} generalises vertex colouring and itself has been the subject of extensive research; see e.g.\ \cite[Chapter~3]{scheinerman2011fractional}.
In a \emph{$k$-multi-colouring of a graph}, each vertex receives a set of $k$ colours, and such a colouring is \emph{proper} if adjacent vertices receive disjoint colour sets.
A graph $G$ is \emph{$(n, k)$-colourable} if there is a proper $k$-multi-colouring using $k$-subsets from~$[n]$ (${}=\{1,2,\ldots,n\}$).
For a positive integer $k$, the \emph{$k$-th multi-chromatic number~$\chi_k(G)$} is the smallest~$n$ such that $G$ is $(n,k)$-colourable. 

Note that if $k=1$, then $k$-multi-colouring is just normal vertex colouring, and $\chi_1(G)$ is just the normal chromatic number $\chi(G)$.

It this note we consider the following question.

\begin{myques}\label{main_ques}\mbox{}\\*
    If a graph $G$ is $(n, k)$-colourable, then for what pairs $(n', k')$ are we guaranteed that~$G$ is also $(n', k')$-colourable?
\end{myques}

Note that the corresponding question for standard $n$-colouring is trivial: if~$G$ is $n$-colourable, then it is $n'$-colourable for all $n'\ge n$.
More precisely: if $\chi(G)=n$, then~$G$ is $n'$-colourable if and only if $n'\ge n$.
Maybe somewhat surprisingly, the question for multi-colouring appears to be much more challenging, and in fact is mostly open.

\medskip
Kneser graphs play a central role in the studies of multi-colouring.
For $n\ge k\ge1$, the \emph{Kneser graph $K(n, k)$} has as vertex set the collection of all $k$-subsets of $[n]$ (denoted by $\dbinom{[n]}{k}$), and there is an edge between two vertices if and only if the two $k$-sets are disjoint.
We will usually assume $n\ge2k$, as otherwise the Kneser graph is edgeless.

It is well known and easy to prove (see e.g.\ \cite[Section~3.2]{scheinerman2011fractional}) that a graph~$G$ is $(n,k)$-colourable if and only if there is a homomorphism from~$G$ to $K(n,k)$.
(A \emph{homomorphism} from a graph~$G$ to a graph $H$ is a mapping $\varphi:V(G)\to V(H)$ that preserves edges: if $uv$ is an edge in~$G$, then $\varphi(u)\varphi(v)$ is an edge in $H$.)
This means that the following question is equivalent to Question~\ref{main_ques}.

\begin{myques}\label{main_ques2}\mbox{}\\*
    Given $n,k$, for what $n',k'$ do we have $n'\ge\chi_{k'}(K(n,k))$?
\end{myques}

This question was first studied by Stahl, who formulated the following conjecture in 1976.

\begin{myconj}[Stahl~\cite{stahl1976n}]\label{st-con}\mbox{}\\*
    Let $n,k$ be integers, $n\ge2k\ge2$.
    Then for $k'=qk-r$, where $q,r$ are integers with $q\ge1$ and $0\le r\le k-1$, we have $\chi_{k'}(K(n,k)) = qn-2r$.
\end{myconj}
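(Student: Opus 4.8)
The plan is to prove the two inequalities $\chi_{k'}(K(n,k))\le qn-2r$ and $\chi_{k'}(K(n,k))\ge qn-2r$ separately. The upper bound is elementary and should go through completely; the lower bound is where all the difficulty lies, and I expect to be able to settle it only for restricted ranges of the parameters.

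\textbf{Upper bound.} I would use two ingredients. First, $\chi_m(\cdot)$ is subadditive: if $G$ is $(a,b)$-colourable and $(a',b')$-colourable, then placing the two colourings on disjoint palettes shows $G$ is $(a+a',b+b')$-colourable. Since $K(n,k)$ is $(n,k)$-colourable via the identity assignment $S\mapsto S$, iterating this gives that $K(n,k)$ is $(qn,qk)$-colourable, equivalently there is a homomorphism $K(n,k)\to K(qn,qk)$. Second, I would establish the reduction homomorphism $K(m,\ell)\to K(m-2,\ell-1)$, valid for all $m\ge 2\ell$, by an explicit recipe: fix two distinguished coordinates; send an $\ell$-set disjoint from both to the $(\ell-1)$-set obtained by deleting its largest element, an $\ell$-set meeting exactly one distinguished coordinate to the set obtained by deleting that coordinate, and an $\ell$-set containing both distinguished coordinates to the set obtained by deleting both and adjoining the largest element of $[m-2]$ not already among the remaining coordinates --- a short case check, using $m\ge 2\ell$, confirms that disjointness is preserved. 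Composing $K(n,k)\to K(qn,qk)$ with $r$ successive reductions (the hypothesis $m\ge 2\ell$ persists at every step because $qn\ge 2qk$) yields a homomorphism $K(n,k)\to K(qn-2r,qk-r)$, and hence $\chi_{k'}(K(n,k))\le qn-2r$.

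\textbf{Lower bound.} When $r=0$ this is immediate from the fractional relaxation: $\chi_m(G)\ge m\,\chi_f(G)$ for every graph $G$, and $\chi_f(K(n,k))=n/k$, so $\chi_{qk}(K(n,k))\ge qn$. For $r\ge 1$ the fractional bound yields only $\chi_{k'}(K(n,k))\ge k'n/k=qn-rn/k$, which falls short of $qn-2r$ by $r(n-2k)/k$; closing this gap is the heart of the matter. The natural approach is to push the topological lower bounds behind Lov\'asz's solution of Kneser's conjecture: a homomorphism $K(n,k)\to K(n',k')$ induces a $\mathbb{Z}_2$-map between the box (or neighbourhood) complexes, whose connectivity is well understood for Kneser graphs, so a Borsuk--Ulam argument should constrain $n'$ from below. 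Note that the special case $q=1$, $r=k-1$ of the claim is exactly $\chi(K(n,k))\ge n-2k+2$, so any complete proof must subsume the Kneser--Lov\'asz theorem; for $q=1$ in general one can try to extend the corresponding topological arguments to the multi-colouring setting.

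\textbf{Main obstacle.} The difficulty is the case $q\ge 2$. The sum-of-colourings construction multiplies \emph{both} parameters of a colouring, whereas topological invariants such as the connectivity of the box complex are at best additive under disjoint-palette sums, so they tend to reproduce only the fractional value $qn$ rather than detecting the extra loss of $2r$. What one really needs is an invariant of $K(n,k)$ that already sees the value $n-2r$ throughout the $q=1$ range \emph{and} scales under sums of colourings so as to produce exactly $qn-2r$ --- and constructing such an invariant is precisely what has resisted all attempts so far. Consequently I expect the argument above to deliver the full conjecture only for small $k$, for small $q$, or for $n$ close to $2k$ (where the fractional bound is already tight or nearly so), while the general case will remain out of reach.
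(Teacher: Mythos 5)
The statement you were asked to prove is Stahl's Conjecture itself, which the paper does not prove and which remains open; so there is no complete argument for you to match, and your proposal is appropriately honest about that. What you do establish is correct and coincides with the paper's treatment. Your upper bound is exactly the paper's derivation of~\eqref{eq:up_bd}: the disjoint-palette sum of colourings is Lemma~\ref{prop2.1} (Geller--Stahl), and your explicit reduction recipe (delete the largest element, with a special case for sets containing both distinguished coordinates, replacing them by the largest element of the complement) is essentially verbatim the paper's new proof of Lemma~\ref{sta:prop}; your case check, including the observation that the largest element $x$ of $[n]\setminus S_1$ satisfies $x\ge\max S_2$, is the same one the paper performs. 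Your fractional lower bound $\chi_{k'}(K(n,k))\ge k'n/k$ is Observation~\ref{obs1} (the paper derives it from Erd\H{o}s--Ko--Rado rather than from $\chi_f$, but as the paper notes via the lexicographic product these are the same bound), and taking ceilings of it is precisely Theorem~\ref{thm-main}, which settles $r=0$ and more generally $r\le k/(n-2k)$ (Theorem~\ref{thm:main1}, using Theorem~\ref{thm:eq_indep_bound} for the boundary case).

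One concrete caution about your proposed route for the remaining lower bound: pushing the Lov\'asz-style topological bounds alone is known not to suffice. Csorba and Oszt\'enyi~\cite{csorba2010topological} showed that for $k'\ge\binom{n}{k}$ the topological lower bound yields only $\chi_{k'}(K(n,k))\ge k'\lfloor n/k\rfloor$, i.e.\ it does not even recover the fractional bound in general, let alone the extra $-2r$ correction. The paper's partial progress beyond the fractional bound instead comes from finer combinatorial information about large independent sets: the uniqueness part of Erd\H{o}s--Ko--Rado and the Hilton--Milner theorem, which force most colour classes in a near-optimal multi-colouring to be trivial (star-shaped) and thereby allow one colouring to be reduced to another (Lemmas~\ref{thm:improve_stahl_large_q} and~\ref{thm:improve_CGJ_large_n}, leading to the finite-checking result Theorem~\ref{thm:main2}). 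If you want to make progress on the gap of $r(n-2k)/k$ that you correctly identify, stability results for intersecting families are the more promising tool than connectivity of the box complex.
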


The conjecture is known to hold for some special values of $n,k,k'$, but few general results are known. For instance, the conjecture is trivially true for $k=1$ (since the Kneser graph $K(n,1)$ is just the complete graph on $n$ vertices).
It is also true for $k'=1$ by Lov\'asz's proof~\cite{lovasz1978kneser} of the Kneser Conjecture: $\chi_{1}(K(n,k))= \chi(K(n,k))=n-2(k-1)$.
(Though note that at the time~\cite{stahl1976n} appeared the Kneser Conjecture was still open.)

In Stahl's original paper it was proved that the conjectured value is an upper bound, i.e.\ ($n,k,q,r$ as in Conjecture~\ref{st-con}):
\begin{equation}\label{eq:up_bd}
    \chi_{qk-r}(K(n,k))\le qn-2r.
\end{equation}
In the next section we explain how this bound can be derived.

Our first results are based on a simple observation, which seems to have been missed in the research on Stahl's Conjecture.

\begin{myobs}\label{obs1}\mbox{}\\*
    Let $n,k,k'$ be integers, $n\ge2k\ge2$, $k'\ge1$. Then we have $\chi_{k'}(K(n,k))\ge \dfrac{k'n}{k}$.
\end{myobs}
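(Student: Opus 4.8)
The plan is to use the fractional chromatic number as the bridge. Recall that the fractional chromatic number $\chi_f(G)$ can be defined as $\chi_f(G) = \lim_{k\to\infty} \chi_k(G)/k = \inf_k \chi_k(G)/k$, and moreover for every $k\ge 1$ one has $\chi_k(G) \ge k\,\chi_f(G)$ (this is the standard superadditivity/subadditivity fact: $\chi_{k+l}(G) \le \chi_k(G) + \chi_l(G)$, so by Fekete's lemma the infimum equals the limit, and each term dominates the limit). Applying this to $G = K(n,k)$ reduces the claim to showing $\chi_f(K(n,k)) = n/k$.

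For the value of the fractional chromatic number of the Kneser graph, I would give a direct self-contained argument rather than cite it, since it is short. For the upper bound $\chi_f(K(n,k)) \le n/k$: the $n$ cyclic shifts of any fixed $k$-interval around $\mathbb{Z}/n\mathbb{Z}$ form a family of $n$ independent sets in $K(n,k)$ (two cyclic intervals of length $k$ on $n$ points, with $n \ge 2k$, are never disjoint), and each vertex — indeed each $k$-subset of $[n]$ — actually we need independent sets covering each vertex equally; more cleanly, take \emph{all} maximal independent sets (stars) $S_i = \{A \in \binom{[n]}{k} : i \in A\}$ for $i \in [n]$; each vertex lies in exactly $k$ of them, so assigning weight $1/k$ to each $S_i$ gives a fractional colouring of total weight $n/k$. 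For the lower bound $\chi_f(K(n,k)) \ge n/k$, I would use the clique-number-type bound via vertex-transitivity: for a vertex-transitive graph $\chi_f(G) = |V(G)|/\alpha(G)$, and by the Erdős–Ko–Rado theorem $\alpha(K(n,k)) = \binom{n-1}{k-1}$, giving $\chi_f(K(n,k)) = \binom{n}{k} / \binom{n-1}{k-1} = n/k$. Alternatively, and perhaps preferably to keep things elementary, I would note the lower bound follows from averaging: in any proper $k'$-multicolouring of $K(n,k)$ with colours from $[N]$, each colour class is an independent set of size at most $\binom{n-1}{k-1}$, and the total number of (vertex, colour) incidences is $k'\binom{n}{k}$, so $N \ge k'\binom{n}{k}/\binom{n-1}{k-1} = k'n/k$, which is exactly the desired inequality $\chi_{k'}(K(n,k)) \ge k'n/k$ directly, bypassing $\chi_f$ entirely.

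So the cleanest route, which I would actually write up, is the last one: it is a one-line double count. Let $N = \chi_{k'}(K(n,k))$ and fix a proper $k'$-multicolouring $\varphi$ using colours in $[N]$. For each colour $c \in [N]$, the set of vertices $A$ with $c \in \varphi(A)$ is an independent set in $K(n,k)$, hence a pairwise-intersecting family of $k$-subsets of $[n]$, hence has size at most $\binom{n-1}{k-1}$ by Erdős–Ko–Rado (valid since $n \ge 2k$). Summing over all $c$, $\sum_{c\in[N]} |\{A : c \in \varphi(A)\}| \le N\binom{n-1}{k-1}$; on the other hand this sum equals $\sum_{A} |\varphi(A)| = k'\binom{n}{k}$. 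Hence $N \ge k'\binom{n}{k}/\binom{n-1}{k-1} = k'n/k$.

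The main obstacle — really the only nontrivial ingredient — is the Erdős–Ko–Rado theorem supplying $\alpha(K(n,k)) = \binom{n-1}{k-1}$ for $n \ge 2k$; everything else is a counting identity. If one wishes to avoid invoking EKR, the fractional-chromatic-number framing is the alternative, but that ultimately rests on the same independence-number fact (via vertex-transitivity), so EKR is genuinely the crux. I would therefore simply cite EKR and present the double-counting argument, which makes the whole proof two or three lines.
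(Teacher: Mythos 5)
Your final write-up is exactly the paper's proof: a double count of (vertex, colour) incidences, bounding each colour class by $\alpha(K(n,k))=\binom{n-1}{k-1}$ via the Erd\H{o}s--Ko--Rado theorem, giving $N\ge k'\binom{n}{k}/\binom{n-1}{k-1}=k'n/k$. The fractional-chromatic-number detour you describe is a valid alternative framing (and the paper itself notes the lexicographic-product version of it in Section~2), but the argument you actually commit to is the same as the paper's.
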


\begin{proof}
For any $(n',k')$-colouring of a graph~$G$, each \emph{colour class} (the set of vertices whose colour set contains a particular colour) is an independent set, hence contains at most $\alpha(G)$ vertices.
(Here $\alpha(G)$ is the \emph{independence number of $G$}, the size of a largest independent set in $G$.)
Since each vertex appears in $k'$ colour classes, we have $k'\cdot|V(G)|\le n'\cdot\alpha(G)$, hence $\chi_{k'}(G)\ge\dfrac{k'|V(G)|}{\alpha(G)}$.

For Kneser graphs, by definition we have $|V(K(n,k))|=\dbinom{n}{k}$, while the celebrated Erd\H{o}s-Ko-Rado Theorem~\cite{erdos_intersection_1961} means that $\alpha(K(n,k))=\dbinom{n-1}{k-1}$ for all $n\ge2k\ge2$.
Substituting those values in the lower bound for $\chi_{k'}(G)$ above immediately gives $\chi_{k'}(K(n,k))\ge \dfrac{k'{\binom{n}{k}}}{\binom{n-1}{k-1}}= \dfrac{k'n}{k}$.
\end{proof}
 
Since $\chi_{k'}(K(n,k))$ is an integer, we immediately obtain the following result.

\begin{mythm}\label{thm-main}\mbox{}\\*
    Let $n,k$ be integers, $n\ge2k\ge2$. 
    Then for $k'=qk-r$, where $q,r$ are integers with $q\ge1$ and $0\le r\le k-1$, we have $\chi_{k'}(K(n,k))\ge \Bigl\lceil\dfrac{k'n}{k}\Bigr\rceil= qn-\Bigl\lfloor\dfrac{rn}{k}\Bigr\rfloor= qn-2r-\Bigl\lfloor\dfrac{r(n-2k)}{k}\Bigr\rfloor$.
\end{mythm}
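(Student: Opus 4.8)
The plan is to derive the bound immediately from Observation~\ref{obs1} together with the integrality of $\chi_{k'}$, and then to check by elementary arithmetic that the three expressions in the statement coincide. For the inequality, I would simply invoke Observation~\ref{obs1}, which already gives $\chi_{k'}(K(n,k)) \ge \dfrac{k'n}{k}$; since $\chi_{k'}(K(n,k))$ is by definition a positive integer, it is at least $\bigl\lceil \tfrac{k'n}{k} \bigr\rceil$. This is the only inequality in the statement; everything else is bookkeeping.

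For the two equalities, substitute $k' = qk - r$, so that $\dfrac{k'n}{k} = qn - \dfrac{rn}{k}$. Since $qn$ is an integer and $\lceil -x \rceil = -\lfloor x \rfloor$, this gives $\bigl\lceil \tfrac{k'n}{k} \bigr\rceil = qn - \bigl\lfloor \tfrac{rn}{k} \bigr\rfloor$. For the last form, write $\dfrac{rn}{k} = \dfrac{r(n-2k)}{k} + 2r$ and pull the integer $2r$ out of the floor. The hypotheses $r \ge 0$ and $n \ge 2k$ enter only to record that the correction term $\bigl\lfloor r(n-2k)/k \bigr\rfloor$ is nonnegative.

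I expect no real obstacle: all the content sits in Observation~\ref{obs1}, and the rest is routine. The one point worth flagging is interpretive rather than technical: this last rewriting exhibits our bound as Stahl's conjectured value $qn - 2r$ reduced by the nonnegative correction $\bigl\lfloor r(n-2k)/k \bigr\rfloor$, so the lower bound here meets Stahl's upper bound~\eqref{eq:up_bd}---and hence settles Conjecture~\ref{st-con}---exactly when that correction vanishes, for instance when $r = 0$ or $n = 2k$.
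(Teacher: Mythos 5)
Your proposal is correct and is exactly the paper's argument: the paper derives Theorem~\ref{thm-main} as an immediate consequence of Observation~\ref{obs1} and the integrality of $\chi_{k'}(K(n,k))$, with the same floor/ceiling bookkeeping you describe. Nothing is missing.
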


Surprisingly, by splitting the Kneser graph $K(n,k)$ into smaller subgraphs and using Theorem~\ref{thm-main} for each of the subgraphs, sometimes it is possible to get a better bound than using the theorem directly. 

\begin{thm}\label{thm-split}\mbox{}\\*
    Let $n,k,r,q$ be integers, $n\ge2k\ge2$, $q\ge1$ and $0\le r\le k-1$.
    Choose $n_1,\ldots,n_t$ such that $n=\sum\limits_{i=1}^tn_i$ and $2k\le n_i<4k$ for all~$i$.
    Then we have $\chi_{qk-r}(K(n,k))\ge qn-\sum\limits_{i=1}^t\Bigl\lfloor\dfrac{n_ir}{k}\Bigr\rfloor$.
\end{thm}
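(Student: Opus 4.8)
The plan is to partition the ground set rather than argue about $K(n,k)$ monolithically. Since $n=\sum_{i=1}^t n_i$, I would fix a partition $[n]=B_1\cup\cdots\cup B_t$ with $|B_i|=n_i$, and for each $i$ let $H_i$ be the subgraph of $K(n,k)$ induced on $\binom{B_i}{k}$. Because two $k$-subsets of $B_i$ are adjacent in $K(n,k)$ precisely when they are disjoint, $H_i$ is an induced copy of $K(n_i,k)$, and since $n_i\ge 2k\ge 2$ I can apply Theorem~\ref{thm-main} to it.

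Next I would take a proper $(n',k')$-colouring $\varphi$ of $K(n,k)$ with $n'=\chi_{k'}(K(n,k))$ and $k'=qk-r$, and for each $i$ let $C_i\subseteq[n']$ be the set of colours that $\varphi$ uses on $V(H_i)$. The restriction of $\varphi$ to $H_i$ is a proper $k'$-multi-colouring of $K(n_i,k)$ using exactly $|C_i|$ colours, so Theorem~\ref{thm-main} gives $|C_i|\ge\chi_{k'}(K(n_i,k))\ge qn_i-\lfloor n_ir/k\rfloor$. The key step is the observation that the palettes $C_i$ are pairwise disjoint: distinct blocks $B_i,B_j$ are disjoint, so every $k$-subset of $B_i$ is disjoint from every $k$-subset of $B_j$ and hence adjacent to it in $K(n,k)$; that is, $H_i$ and $H_j$ are completely joined, so no colour occurring on a vertex of $H_i$ can reoccur on a vertex of $H_j$. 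Summing then yields $\chi_{k'}(K(n,k))=n'\ge\bigl|\bigcup_{i=1}^t C_i\bigr|=\sum_{i=1}^t|C_i|\ge\sum_{i=1}^t\bigl(qn_i-\lfloor n_ir/k\rfloor\bigr)=qn-\sum_{i=1}^t\lfloor n_ir/k\rfloor$.

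I do not expect a serious obstacle: the only genuinely new idea is the ``completely joined'' observation, and it is exactly what lets one \emph{add} the local bounds instead of merely taking their maximum. The remaining points are routine --- that the induced subgraph on $\binom{B_i}{k}$ really is $K(n_i,k)$ and meets the hypotheses of Theorem~\ref{thm-main} (here $n_i\ge 2k$ is used), and the identity $qn_i-\lfloor n_ir/k\rfloor=\lceil n_ik'/k\rceil$ already recorded in that theorem. I would also note that the inequality holds for \emph{any} partition of $[n]$ into blocks of size at least $2k$; the extra condition $n_i<4k$ is imposed only because $\lfloor\cdot\rfloor$ is superadditive, so refining a block of size $\ge 4k$ into two blocks of size $\ge 2k$ can only decrease $\sum_i\lfloor n_ir/k\rfloor$ and hence only strengthen the bound. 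Since $\lfloor nr/k\rfloor\ge\sum_i\lfloor n_ir/k\rfloor$, this can indeed beat applying Theorem~\ref{thm-main} directly to $K(n,k)$, which gives only $qn-\lfloor nr/k\rfloor$.
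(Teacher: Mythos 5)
Your proof is correct and is essentially the paper's own argument: the paper likewise partitions the ground set, notes that the induced copies of $K(n_i,k)$ are completely joined (so their colour palettes are disjoint and the lower bounds from Theorem~\ref{thm-main} add up), and makes the same superadditivity remark to justify restricting to $2k\le n_i<4k$. The only difference is cosmetic --- the paper routes the statement through a slightly more general lemma that also allows blocks of size below $2k$.
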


We will prove this theorem in Subsection~\ref{ssec3.1}.
And although the result is a fairly direct corollary of Theorem~\ref{thm-main}, it actually can give better bounds in many cases.
For example, if $n=t(2k+1)$, then Theorem~\ref{thm-split} with $n_1=\cdots=n_t=2k+1$ gives $\chi_{qk-r}(K(n,k))\ge qn-2rt$, whilst Theorem~\ref{thm-main} only gives $\chi_{qk-r}(K(n,k))\ge qn-2rt-\Bigl\lfloor\dfrac{rt}k\Bigr\rfloor$.

\medskip
Observation~\ref{obs1} also almost immediately gives the following small improvement of the main result in Oszt\'enyi~\cite{osztenyi2020proof}; again with a much simpler and shorter proof.
As before, details can be found in Subsection~\ref{ssec3.1}.

\begin{mythm}\label{thm:main1}\mbox{}\\*
    Let $n,k,r$ be integers, $n\ge2k+1\ge3$ and $0\le r\le\dfrac{k}{n-2k}$. Then for all $q\ge1$ we have $\chi_{qk-r}(K(n,k))=qn-2r$.
\end{mythm}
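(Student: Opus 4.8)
The plan is to keep the upper bound \eqref{eq:up_bd}, which already gives $\chi_{qk-r}(K(n,k))\le qn-2r$, and to supply a matching lower bound. First I would rewrite the hypothesis $0\le r\le\frac{k}{n-2k}$ in the cleaner form $0\le r(n-2k)\le k$ (legitimate because $n-2k\ge1$), and then split into two cases according to whether this holds with strict inequality or with equality. Note that $r(n-2k)$ is an integer, so these two cases are exhaustive.

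In the ``generic'' case $r(n-2k)\le k-1$ — which in particular contains $r=0$ — one has $2r\le\frac{rn}{k}<2r+1$, hence $\bigl\lfloor\frac{rn}{k}\bigr\rfloor=2r$, and Theorem~\ref{thm-main} (that is, Observation~\ref{obs1} together with integrality of $\chi_{k'}$) immediately gives $\chi_{qk-r}(K(n,k))\ge qn-2r$. Combined with \eqref{eq:up_bd} this settles the theorem whenever $r(n-2k)<k$.

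The only remaining, and the genuinely new, case is the boundary $r(n-2k)=k$, where $1\le r\le k-1$ (as always in Stahl's parametrisation). Here $\frac{k'n}{k}=qn-2r-1$ is an \emph{integer}, so Observation~\ref{obs1} falls one unit short of the target, and the whole content of the proof is to recover that unit. I would do so by showing that $K(n,k)$ is not $(N,k')$-colourable for $N:=\frac{k'n}{k}$. Assume it were. Since every vertex lies in exactly $k'$ colour classes, $\sum_j|C_j|=k'\binom{n}{k}=N\binom{n-1}{k-1}$, while each colour class is independent and so has at most $\alpha(K(n,k))=\binom{n-1}{k-1}$ vertices; equality then forces every colour class to be a \emph{maximum} independent set of $K(n,k)$. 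As $n\ge2k+1>2k$, the Erd\H{o}s--Ko--Rado theorem~\cite{erdos_intersection_1961}, including its description of the extremal families, tells us each colour class is a star $S_i=\{A\in\binom{[n]}{k}:i\in A\}$. Writing $c_a$ for the number of colour classes equal to the star centred at $a\in[n]$, the requirement that each $k$-set $A$ meet exactly $k'$ colour classes reads $\sum_{a\in A}c_a=k'$ for all $A\in\binom{[n]}{k}$; comparing two $k$-sets that differ in exactly one element (possible since $n\ge k+1$) gives $c_x=c_y$ for all $x,y\in[n]$, so all $c_a$ equal a common value $c$ and $kc=k'$. But $k\nmid k'$ because $1\le r\le k-1$, a contradiction. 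Hence $\chi_{qk-r}(K(n,k))\ge N+1=qn-2r$, and with \eqref{eq:up_bd} we get equality.

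I expect the boundary case to be the only real obstacle: one has to notice that the easy counting bound is off by exactly one there, and then exploit the \emph{uniqueness} half of Erd\H{o}s--Ko--Rado — available precisely because $n>2k$ — to rule out the extremal colouring. The reduction of the hypothesis to $r(n-2k)\le k$, the generic case, and the final combination with Stahl's upper bound should all be routine.
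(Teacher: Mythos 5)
Your proposal is correct and follows essentially the same route as the paper: the generic case $r(n-2k)<k$ via Theorem~\ref{thm-main} plus the upper bound~\eqref{eq:up_bd}, and the boundary case $r(n-2k)=k$ by ruling out a $(qn-2r-1,qk-r)$-colouring. The only difference is that where the paper simply invokes Theorem~\ref{thm:eq_indep_bound}, you re-derive the needed direction of it inline (equality in the counting bound forces every colour class to be an Erd\H{o}s--Ko--Rado star, whence $k\mid k'$), which is exactly the argument behind that cited result.
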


\medskip
Our next result shows that for a fixed $k$ only a finite number of values of $\chi_{k'}(K(n,k))$ need to be determined in order to conclude whether or not Stahl's Conjecture is true for that value of $k$ and for \emph{all $n$ and $k'$}.

\begin{mythm}\label{thm:main2}\mbox{}\\*
    Let $k\ge2$ be fixed.
    Then there exist $n_0(k)$ and $q_0(n,k)$ such that the following holds.
    If for all $2k\le n\le n_0(k)$ we know that $\chi_{qk-(k-1)}(K(n,k))=qn-2(k-1)$ for at least one $q\ge q_0(n,k)$, then we have $\chi_{qk-r}(K(n,k))=qn-2r$ for all $n\ge2k$, $q\ge1$ and $0\le r\le k-1$.
\end{mythm}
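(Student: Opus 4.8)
The plan is to combine Stahl's upper bound~\eqref{eq:up_bd} with the lower bound of Theorem~\ref{thm-main}, and then show that once we know equality holds for a single ``extremal'' instance (namely $r=k-1$ and $q$ large), a suitable \emph{doubling/additivity} argument propagates it to all $n$, $q$ and $r$. The key point is that for $r=k-1$ the lower bound in Theorem~\ref{thm-main} reads $qn-2(k-1)-\lfloor(k-1)(n-2k)/k\rfloor$, which is strictly below the conjectured value $qn-2(k-1)$ whenever $n$ is large compared to $k$; so the Erd\H{o}s--Ko--Rado bound alone cannot give the conjecture for big $n$, and we need to \emph{lift} information from small $n$. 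I would set $n_0(k)$ to be roughly the largest $n$ for which the EKR bound of Theorem~\ref{thm-main} is still tight (something like $n<2k+\tfrac{k}{k-1}$, i.e.\ $n\le 2k+1$ for $k\ge2$, possibly a bit larger after the split trick of Theorem~\ref{thm-split}), together with a few more values needed for the induction base. The quantity $q_0(n,k)$ will come out of the arithmetic in the lifting step, where we need the relevant floors to behave.

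First I would record the reduction to the case $r=k-1$. The homomorphism $K(n,k)\to K(n',k')$ interpretation gives a natural ``composition'' inequality: if $K(n,k)$ is $(N,qk-(k-1))$-colourable and $(M,k)$-colourable, one can build multi-colourings for other parameters; more concretely, I would use the two standard monotonicity facts, namely (i) $\chi_{k'+1}(G)\ge\chi_{k'}(G)+\chi(G)$ type inequalities and the superadditivity $\chi_{k_1+k_2}(G)\ge$ (something) coming from restricting colour classes, and (ii) $\chi_{ak'}(G)\le a\,\chi_{k'}(G)$. Combining an exact value $\chi_{qk-(k-1)}(K(n,k))=qn-2(k-1)$ with the trivial exact value $\chi_{k}(K(n,k))=n$ (take the identity colouring, $q=1,r=0$) should let me ``interpolate'' to get $\chi_{k'}(K(n,k))=qn-2r$ for the same $n$ but all $q\ge1$ and $0\le r\le k-1$: the upper bound is~\eqref{eq:up_bd}, and for the lower bound one writes $k'=qk-r$ as a nonnegative combination of $k$ and of a single $q'k-(k-1)$ and uses superadditivity of $\chi_{\bullet}$, which holds for any graph. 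This disposes of all $n$ in the range $2k\le n\le n_0(k)$.

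Next comes the step from small $n$ to all $n$: the \emph{splitting} idea behind Theorem~\ref{thm-split}, but run in reverse as an additivity statement. The mechanism is that $K(n_1+n_2,k)$ contains a spanning subgraph that is the disjoint-ish union of copies of $K(n_1,k)$ and $K(n_2,k)$ on the vertex classes ``$k$-sets inside $[n_1]$'' and ``$k$-sets inside $[n_2+n_1]\setminus[n_1]$'' (plus the crossing edges), and on the colour side one can take a union of a colouring of $K(n_1,k)$ on colours $[a]$ and of $K(n_2,k)$ on colours $a+1,\ldots,a+b$; the crossing $k$-sets then get coloured greedily because they meet both halves. This yields $\chi_{k'}(K(n_1+n_2,k))\le \chi_{k'}(K(n_1,k))+\chi_{k'}(K(n_2,k))$, and Theorem~\ref{thm-split} gives the matching lower bound. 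Hence if every $n$ with $2k\le n< 4k$ satisfies Stahl's Conjecture (for all $q,r$), then writing any $n\ge 2k$ as $n=\sum n_i$ with $2k\le n_i<4k$ and adding up gives the conjecture for that $n$ too. So it suffices to know the conjecture for $2k\le n\le 4k-1$, which is a finite set; we then fold this finite set into $n_0(k)$ (take $n_0(k)\ge 4k-1$) and are done, modulo the fact — established in the previous paragraph — that for each such small $n$ a single exact value at $(q,\,r=k-1)$ with $q$ large enough already forces all the others.

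The main obstacle is the second paragraph: making the ``interpolation from one value of $q$ at $r=k-1$ to all $(q,r)$'' rigorous. Superadditivity $\chi_{a+b}(G)\ge \chi_a(G)+\chi_b(G)$ is \emph{not} true in general (it fails already for odd cycles), so I cannot simply cite it; instead I must exploit the specific structure of Kneser graphs. The honest route is: the function $q\mapsto \chi_{qk-(k-1)}(K(n,k))$ has slope at most $n$ by~\eqref{eq:up_bd} and at least $n$ by Observation~\ref{obs1} applied with $k'=qk-(k-1)$ and $k'=(q+1)k-(k-1)$ and taking the difference — wait, that only controls differences, not the constant. The real content is that the EKR bound pins the constant term \emph{once $n$ is small}, and then Stahl's own upper-bound construction~\eqref{eq:up_bd}, which is built by \emph{iterating} a fixed ``step'' colouring $q$ times, is tight iff its first step (or its $q$-th step for some threshold $q_0$) is tight; this is where $q_0(n,k)$ enters, as the $q$ beyond which the iterated construction has ``stabilised''. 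Pinning down this stabilisation — showing that tightness of $\chi_{qk-(k-1)}$ for one large $q$ forces tightness of the per-step increment, hence of $\chi_{k'}$ for every $k'$ — is the crux, and I expect it to require a careful look at how Stahl's recursive colouring is assembled. Everything else (the EKR base case, the splitting additivity, the final summation over the $n_i$) is routine given the results already in the paper.
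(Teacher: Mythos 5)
There are two genuine gaps, and the central idea of the paper's proof --- the Hilton--Milner stability theorem for intersecting families --- is absent from your plan. First, the additivity step you use to pass from small $n$ to all $n$ is false in the direction you need. The complete join between the two sub-Kneser graphs gives \emph{super}additivity, $\chi_{k'}(K(n_1+n_2,k))\ge\chi_{k'}(K(n_1,k))+\chi_{k'}(K(n_2,k))$ (this is exactly Lemma~\ref{lem-split}), but the reverse inequality fails: already for $k'=1$ the Lov\'asz--Kneser theorem gives $\chi_1(K(n_1+n_2,k))=n_1+n_2-2k+2$, which exceeds $\chi_1(K(n_1,k))+\chi_1(K(n_2,k))=n_1+n_2-4k+4$ for every $k\ge2$; more generally the conjectured value $qn-2r$ exceeds $(qn_1-2r)+(qn_2-2r)$ whenever $r>0$. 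The ``crossing'' $k$-sets meeting both parts of the ground set cannot be coloured greedily from the union of the two palettes --- they are exactly where the difficulty lies. Second, you correctly identify the interpolation in $q$ as the crux but leave it unproved, and the route you sketch (tightness of the iterated construction ``stabilising'') is not a proof.

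What the paper actually does: using Hilton--Milner (a non-trivial independent set in $K(n,k)$ has size at most $\binom{n-1}{k-1}-\binom{n-k-1}{k-1}+1$, much smaller than the EKR bound), a counting argument shows that in any $(x,qk-r)$-colouring with $x\le qn-2r$: (i) if $q>q_0(n,k)$ then for \emph{every} $i\in[n]$ some colour class is trivial and centred at $i$, so deleting one such class per $i$ removes $n$ colours and at most $k$ colours from each vertex, giving $\chi_{qk-r}(K(n,k))\ge n+\chi_{(q-1)k-r}(K(n,k))$; and (ii) if $n>n_0(k)$ (with $n_0(k)$ of order $k^3$, not $4k$) then some element $i^*$ is the centre of at least $q$ trivial classes, whose removal gives $\chi_{qk-r}(K(n,k))\ge q+\chi_{qk-r}(K(n-1,k))$. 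These two decrement inequalities, together with the easy reverse of (i) and Stahl's upper bound~\eqref{eq:up_bd}, propagate a single exact value at $(q,r)=(q_n,k-1)$ to all $q$ and $r$ for each $n\le n_0(k)$, and then inductively to all $n\ge n_0(k)$. Your reduction from general $r$ to $r=k-1$ for a fixed $q$ is fine (the standard squeeze between $\chi_{qk}=qn$ and $\chi_{k'+1}\ge\chi_{k'}+2$), but without the Hilton--Milner stability input neither the $q$-step nor the $n$-step of your argument can be completed.
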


Possible functions $n_0(k)$ and $q_0(n,k)$ in Theorem~\ref{thm:main2} are given explicitly in Section~\ref{multi_sec:proofs}.
We use these expressions to show that we can take $n_0(k)\le k^3-k^2+2k-2$ and $q_0(n,k)<4^k(n-2k)$ for all $n\ge2k+1\ge5$.

We could replace $q_0(n,k)$ by $q'_0(k)=\max\{\,q_0(n,k)\mid2k\le n\le n_0(k)\,\}$ in the theorem, to remove the dependency of $q_0$ on $n$.
We chose to keep $q_0(n,k)$, since for larger values of $n$ we get better bounds for $q_0(n,k)$.
For instance, if $n\ge k^2+k-1$, then we can show $q_0(n,k)<\mathrm{e}(n-2k)$.

Theorem~\ref{thm:main2} generalises some known results. 
Chv\'atal et al.~\cite{chvacutetal1978two} showed that for fixed $k$ we only need to find $\chi_{k+1}(K(n,k))$ for finitely many~$n$ to decide if Stahl's Conjecture holds for $k'=k+1$ for all~$n$.
And Stahl~\cite{stahl1976n} proved that for fixed $n,k$ and sufficiently large $k'$, the conjecture holds for $k'$ if and only if it holds for $k'-k$.
The proof of that latter result is non-constructive and does not give an explicit bound on the value of $k'$, and hence it can only give a version of Theorem~\ref{thm:main2} without a bound on the function $q_0(n,k)$.

\medskip
Stahl proved the conjecture for $k=2,3$ (see next section).
For $k=4$, our methods show that we only need to find $\chi_{4q-3}(K(n,4))$ for $8\le n\le10$ and $q=13$, and for $11\le n\le 39$ and $q=12$.
(In fact, for larger~$n$ even smaller~$q$ are enough.)
The cases $n=8,9$ follow from Theorem~\ref{th1}\,(a) below.
The case $n=10$ is solved in~\cite{kincses2013special}.
So the first open case is to determine whether or not $\chi_{45}(K(11,4))=126$.
Note that Stahl's bound~\eqref{eq:up_bd} gives $\chi_{45}(K(11,4))\le 126$, while Theorem~\ref{thm-main} shows $\chi_{45}(K(11,4))\ge 124$.

\medskip
The remainder of this paper is organised as follows.
In the next section we discuss some of Stahl's own work in more detail. 
The proofs of our results can be found in Section~\ref{multi_sec:proofs}.
Final observations are discussed in Section~\ref{multi_sec:conclude}.

\section{Stahl's Work} \label{multi_sec:ideas}

In this section we describe some of the original ideas behind Stahl's Conjecture, as developed in \cite{stahl1976n,stahl1998multichromatic}, since many of these ideas are important in the development of our arguments.

The following two results are essential in showing that the conjectured values of $\chi_{k'}(K(n,k))$ are indeed upper bounds.

\begin{mylemma}[Geller and Stahl~\cite{geller1975chromatic}]\label{prop2.1}\mbox{}\\*
    If a graph $G$ is both $(n_1,k_1)$-colourable and $(n_2,k_2)$-colourable, then~$G$ is $(n_1+n_2,k_1+k_2)$-colourable.
\end{mylemma}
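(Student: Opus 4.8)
The plan is to combine the two given multi-colourings directly, after first making their colour palettes disjoint. Since $G$ is $(n_1,k_1)$-colourable, there is a proper $k_1$-multi-colouring $c_1$ assigning to each vertex~$v$ a set $c_1(v)\in\binom{[n_1]}{k_1}$ with $c_1(u)\cap c_1(v)=\emptyset$ whenever $uv\in E(G)$. Similarly there is a proper $k_2$-multi-colouring $c_2$ with colour sets in $\binom{[n_2]}{k_2}$. Relabelling the colours of the second colouring, we may assume $c_2(v)\subseteq\{n_1+1,\ldots,n_1+n_2\}$ for all~$v$, so that the two colourings use disjoint sets of colours, both contained in $[n_1+n_2]$.

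Next I would define the combined colouring $c(v):=c_1(v)\cup c_2(v)$ for every vertex~$v$. Because $c_1(v)$ and $c_2(v)$ lie in disjoint colour palettes, the union is disjoint and $|c(v)|=k_1+k_2$, so $c$ assigns each vertex a $(k_1+k_2)$-subset of $[n_1+n_2]$. It remains to check properness: if $uv\in E(G)$, then $c_1(u)\cap c_1(v)=\emptyset$ and $c_2(u)\cap c_2(v)=\emptyset$; moreover any colour in $c_1(\cdot)$ is distinct from any colour in $c_2(\cdot)$ by the palette separation. Hence $c(u)\cap c(v) = \bigl(c_1(u)\cup c_2(u)\bigr)\cap\bigl(c_1(v)\cup c_2(v)\bigr)=\emptyset$, so $c$ is a proper $(k_1+k_2)$-multi-colouring using colours from $[n_1+n_2]$, which is exactly what it means for $G$ to be $(n_1+n_2,k_1+k_2)$-colourable.

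There is essentially no obstacle here: the only thing to be slightly careful about is the bookkeeping that ensures the two palettes are disjoint (so that both the colour count and the disjointness of colour sets on an edge come out correctly), which is handled by the relabelling step. If one prefers, the same argument can be phrased via homomorphisms: composing homomorphisms $G\to K(n_1,k_1)$ and $G\to K(n_2,k_2)$ with the natural map into a suitable "disjoint-union" Kneser graph on $[n_1+n_2]$ yields a homomorphism $G\to K(n_1+n_2,k_1+k_2)$, but the explicit colour-set union is the most transparent presentation.
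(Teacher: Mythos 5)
Your proof is correct and is exactly the paper's approach: the paper likewise combines an $(n_1,k_1)$-colouring on the palette $[n_1]$ with an $(n_2,k_2)$-colouring on the palette $\{n_1+1,\ldots,n_1+n_2\}$ and takes unions. You have simply written out the properness check in more detail than the paper's one-line remark.
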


\begin{mylemma}[Stahl~\cite{stahl1976n}]\label{sta:prop}\mbox{}\\*
    For integers $n\ge3$ and $k\ge2$, there exists a homomorphism $\varphi$ from $K(n,k)$ to $K(n-2, k-1)$.
\end{mylemma}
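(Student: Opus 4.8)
The plan is to exhibit an explicit homomorphism $\varphi\colon\binom{[n]}{k}\to\binom{[n-2]}{k-1}$ and then check directly that it maps edges to edges. We assume $n\ge2k$ throughout (the case $n<2k$ is degenerate, as $K(n,k)$ is then edgeless). Regard $n-1$ and $n$ as two ``forbidden'' colours. For a vertex $A\in\binom{[n]}{k}$ put $A'=A\setminus\{n-1,n\}\subseteq[n-2]$ and $j=j(A)=|A\cap\{n-1,n\}|\in\{0,1,2\}$, so that $|A'|=k-j$. Since the image is to have size $k-1$, we correct $A'$ by exactly $j-1$ elements: if $j=1$, set $\varphi(A)=A'$; if $j=0$ (so $A=A'\subseteq[n-2]$ with $|A'|=k$), set $\varphi(A)=A'\setminus\{\min A'\}$, i.e.\ delete the smallest element; and if $j=2$ (so $|A'|=k-2$), set $\varphi(A)=A'\cup\{\min([n-2]\setminus A')\}$, i.e.\ adjoin the smallest colour not used by $A$ (well defined, as $|[n-2]\setminus A'|=n-k\ge k\ge1$).

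The task is then to show that $A\cap B=\emptyset$ implies $\varphi(A)\cap\varphi(B)=\emptyset$; the images, being disjoint sets of size $k-1\ge1$, are then automatically distinct, so $\varphi(A)\varphi(B)$ really is an edge of $K(n-2,k-1)$. I would first note that, since $A$ and $B$ are disjoint, the sets $A\cap\{n-1,n\}$ and $B\cap\{n-1,n\}$ are disjoint subsets of $\{n-1,n\}$, so $j(A)+j(B)\le2$; hence, up to swapping $A$ and $B$, the only possibilities are $(j(A),j(B))\in\{(0,0),(1,0),(1,1),(2,0)\}$. In the first three of these, the definition immediately gives $\varphi(A)\subseteq A$ and $\varphi(B)\subseteq B$, so disjointness of the images follows from $A\cap B=\emptyset$.

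The remaining case $(j(A),j(B))=(2,0)$ is the only one requiring a genuine argument, and it is where the particular choice of correction rules matters — the step I expect to be the main obstacle. Here $A=A'\cup\{n-1,n\}$ with $A'\subseteq[n-2]$, and $B\subseteq[n-2]$ is disjoint from $A$, hence $B\subseteq[n-2]\setminus A'$. Now $\varphi(A)=A'\cup\{m\}$ with $m=\min([n-2]\setminus A')$, while $\varphi(B)=B\setminus\{\min B\}\subseteq B$. As $A'\cap B=\emptyset$, the only possible clash is $m\in\varphi(B)$; but $B\subseteq[n-2]\setminus A'$ forces $m\le\min B$, so if $m$ lies in $B$ at all, then $m=\min B$ — precisely the element deleted in forming $\varphi(B)$. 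Hence $m\notin\varphi(B)$, and $\varphi$ is a homomorphism. The moral is that the rule for $j=0$ (delete the smallest) and the rule for $j=2$ (adjoin the smallest) must be chosen compatibly: a mismatched pair of rules fails exactly in this case, most visibly when $n=2k$, where $B$ is forced to equal $[n-2]\setminus A'$.
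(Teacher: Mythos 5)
Your proof is correct and is essentially the same construction as the paper's: the paper deletes the \emph{largest} element when $|S\cap\{n-1,n\}|\le1$ and adjoins the \emph{largest} unused element when $\{n-1,n\}\subseteq S$, so your map is the mirror image of theirs (replace ``min'' by ``max''), and the key step in the $(j(A),j(B))=(2,0)$ case — that the adjoined element is exactly the one deleted from the other set — is the same argument. No gaps.
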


The proof of Lemma~\ref{prop2.1} combines an $(n_1,k_1)$-colouring using colours from $[n_1]=\{1,\ldots,n_1\}$ and an $(n_2,k_2)$-colouring using colours from $\{n_1+1,\ldots,n_1+n_2\}$.

\medskip
The proof of Lemma~\ref{sta:prop} in~\cite{stahl1976n} is more cumbersome than required, so we give a new proof here. First note that if $n<2k$, then any mapping of vertices is a homomorphism, because both $K(n,k)$ and $K(n-2,k-1)$ are edgeless.

So we can assume $n\ge2k$. For each $k$-subset $S\subseteq[n]$, let $\max S$ be the maximum element of~$S$.
Now for each $k$-subset~$S$ with $|S\cap\{n-1,n\}|\le1$ we set $\varphi(S)=S\setminus\{\max S\}$.
If $\{n-1,n\}\subseteq S$, then let~$x$ be the largest integer in $[n]\setminus S$ and set $\varphi(S)=(S\setminus\{n-1,n\})\cup\{x\}$.

To show that $\varphi$ is a homomorphism from $K(n,k)$ to $K(n-2,k-1)$, we must show that if any two $k$-sets $S_1,S_2$ are adjacent in $K(n,k)$, hence are disjoint, then $\varphi(S_1)$ and $\varphi(S_2)$ are disjoint as well.
This is obvious if both $|S_1\cap\{n-1,n\}|\le1$ and $|S_2\cap\{n-1,n\}|\le1$, since then $\varphi(S_1)\subseteq S_1$ and  $\varphi(S_2)\subseteq S_2$, hence $\varphi(S_1)\cap\varphi(S_2)\subseteq S_1\cap S_2=\varnothing$. 
If for one of $S_1,S_2$, say~$S_1$, we have $\{n-1,n\}\subseteq S_1$, then for~$S_2$ we must have $|S_2\cap\{n-1,n\}|=0$.
Since $\max S_2\notin S_1$, we have that the largest element~$x$ in $[n]\setminus S_1$ satisfies $x\ge\max S_2$.
This means that $(S_1\cup\{x\})\cap(S_2\setminus\{\max S_2\})=\varnothing$, which guarantees that $\varphi(S_1)$ and $\varphi(S_2)$ are also disjoint in this case.\hfill\qedsymbol

\medskip
The existence of a homomorphism from $K(n,k)$ to $K(n-2,k-1)$ means that for any graph~$G$, if~$G$ is $(n,k)$-colourable, then $G$ is also $(n-2,k-1)$-colourable.
And hence for any graph $G$ with at least one edge, we have $\chi_{k'-1}(G)\le\chi_{k'}(G)-2$. (For an edge-less graph $G$ we have $\chi_{k'-1}(G)=k'-1=\chi_{k'}(G)-1$ for all~$k'$.)

Stahl's Conjecture states that $\chi_{qk-r}(K(n,k))=qn-2r$, for all $q\ge1$ and $0\le r\le k-1$.
The two lemmas above can explain where this expression comes from. Firstly, by definition $K(n,k)$ is $(n,k)$-colourable.
By using Lemma~\ref{prop2.1} $q\ge1$ times, we find that $K(n,k)$ is $(qn,qk)$-colourable.
(And then in fact Observation~\ref{obs1} shows that $\chi_{qk}(K(n,k))=qn$.)
Now applying Lemma~\ref{sta:prop} $r\ge0$ times gives that $K(n,k)$ is $(qn-2r,qk-r)$-colourable.
This proves the upper bound $\chi_{qk-r}(K(n,k))\le qn-2r$ from~\eqref{eq:up_bd}.

Combining that $\chi_{qn}(K(n,k))=qn$ and $\chi_{k'-1}(K(n,k))\le\chi_{k'}(K(n,k))-2$ for all~$k'$, we can conclude that for any fixed $q$, if $\chi_{qk-(k-1)}(K(n,k))=qn-2(k-1)$, then $\chi_{qk-r}(K(n,k))=qn-2r$ for the same~$q$ and all $0\le r\le k-1$.
This means that in order to prove Stahl's Conjecture, it suffices to prove it for $r=k-1$.
An immediate corollary of the Lov\'asz-Kneser Theorem, $\chi_1(K(n,k))=n-2k+2$, is that the conjecture is true for $q=1$; see Theorem~\ref{th1}(c).

The observations in this section provide other ways to prove the lower bound $\chi_{qk-r}(K(n,k))\le qn-2r$.
For instance, we can use Lemma~\ref{prop2.1} to construct a $(qn-2r,qk-r)$-colouring of $K(n,k)$ by combining $q-1$ copies of an $(n,k)$-colouring and one copy of an $(n-2r,k-r)$ colouring.
And in fact, in general there are many ways to obtain similar multi-colourings.
For instance, if $k,q,r\ge2$, we can take $q-2$ copies of an $(n,k)$-colouring, one copy of an $(n-2,k-1)$-colouring and one copy of an $(n-2(r-1),k-(r-1))$-colouring to get the same bound.

This multitude of possible $(n-2(r-1),k-(r-1))$-colourings may be one of the reasons why Stahl's Conjecture is so difficult to prove.
The proof of Theorem~\ref{thm:eq_indep_bound} below shows that taking~$q$ copies of an $(n,k)$-colouring of $K(n,q)$ is in essence the only way to obtain an $(qn,qk)$-colouring of $K(n,q)$, which might explain why for that case we can prove the conjecture.

\medskip
Stahl also proved the conjecture for some special values of $n,k,k'$.

\pagebreak[4]
\begin{mythm}\label{th1}\mbox{}\\*
    {\rm (a)} For all $k$ and $k'$, Conjecture \ref{st-con} is true for the bipartite Kneser graphs $K(2k,k)$ and for the so-called odd graphs $K(2k+1,k)$ (Stahl~\cite{stahl1976n}).

    {\rm (b)} For all $n$ and $k$, Conjecture \ref{st-con} is true for any $k'$ that is an integer multiple of~$k$; in other words: $\chi_{qk}(K(n,k))=qn$ (Stahl~\cite{stahl1976n}).

    {\rm (c)} For all $n$ and $k$, Conjecture \ref{st-con} is true for all $k'\le k$; in other words: $\chi_{k-r}(K(n,k))=n-2r$ (Stahl~\cite{stahl1976n}).

    {\rm (d)} For all $n$ and $k'$, Conjecture \ref{st-con} is true for $k=2$ and $k=3$ (Stahl~\cite{stahl1998multichromatic}).
\end{mythm}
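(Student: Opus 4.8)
The plan is to note that all four parts assert equalities $\chi_{qk-r}(K(n,k))=qn-2r$, and that the upper bound $\chi_{qk-r}(K(n,k))\le qn-2r$ is already supplied by~\eqref{eq:up_bd} in every case; so throughout I only need the matching lower bounds. Parts~(a)--(c) I expect to fall straight out of the results already established, leaving part~(d) as the real work.

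For part~(b) I would apply Observation~\ref{obs1} with $k'=qk$, which gives $\chi_{qk}(K(n,k))\ge (qk)\,n/k=qn$; together with~\eqref{eq:up_bd} this is equality. For part~(a), both families have $n-2k\in\{0,1\}$, so for every $0\le r\le k-1$ the correction term in Theorem~\ref{thm-main} vanishes, $\bigl\lfloor r(n-2k)/k\bigr\rfloor=0$, and the theorem already yields the lower bound $\chi_{qk-r}(K(n,k))\ge qn-2r$; hence equality for all $k'$. For part~(c) (the case $q=1$) I would induct on $k'$: the Lov\'asz--Kneser theorem gives the base value $\chi_1(K(n,k))=n-2(k-1)$, and repeatedly applying the step inequality $\chi_{k'}(K(n,k))\ge\chi_{k'-1}(K(n,k))+2$ (a consequence of Lemma~\ref{sta:prop}, valid since $K(n,k)$ has an edge) gives $\chi_{k-r}(K(n,k))\ge\bigl(n-2(k-1)\bigr)+2(k-1-r)=n-2r$, again matching~\eqref{eq:up_bd}.

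For part~(d) I would first invoke the reduction described after Lemma~\ref{sta:prop}: combining $\chi_{qk}(K(n,k))=qn$ with the step inequality shows that, for each fixed~$q$, proving $\chi_{qk-(k-1)}(K(n,k))=qn-2(k-1)$ forces the conjectured value for all $0\le r\le k-1$. So it suffices to treat $r=k-1$, i.e.\ to establish the lower bounds $\chi_{2q-1}(K(n,2))\ge qn-2$ and $\chi_{3q-2}(K(n,3))\ge qn-4$ for all $q\ge1$. The cases $n\le 2k+1$ are already covered by part~(a), so I may assume $n\ge 2k+2$.

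The main obstacle is that these lower bounds provably cannot come from any fractional or weighting argument: for vertex-transitive graphs the fractional relaxation yields exactly the bound of Theorem~\ref{thm-main}, whose value $\lceil k'n/k\rceil$ already falls short of $qn-2(k-1)$ by an amount that grows linearly in~$n$ once $n\ge 2k+2$. One must therefore exploit the \emph{integral} structure of the colour classes. The approach I would take is to classify each colour class as an intersecting family of $k$-sets --- for $k=2$ every such family is contained in a star or in a triangle, and for $k=3$ the Erd\H{o}s--Ko--Rado and Hilton--Milner theorems restrict the possibilities similarly --- and then to induct on~$n$ by deleting a ground-set element~$x$: the restriction to $[n]\setminus\{x\}$ is a copy of $K(n-1,k)$, to which the inductive hypothesis applies, and the task becomes showing that the $k$-sets containing~$x$ force at least~$q$ additional colours. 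Making this forcing precise is the delicate step, since a colour on a set through~$x$ may reappear inside $[n]\setminus\{x\}$ through triangle-type (non-star) classes; controlling these overlaps --- very likely through a strengthened inductive hypothesis that tracks the colours near a fixed vertex --- is exactly where the substantive combinatorial work of Stahl's argument is concentrated.
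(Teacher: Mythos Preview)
Your arguments for parts~(a)--(c) are correct and match what the paper does. The paper does not give a formal proof block for this theorem; it is stated as a compendium of Stahl's results, with the surrounding prose indicating exactly the derivations you use: the remark after the theorem says that~(b) follows from Theorem~\ref{thm-main} together with~\eqref{eq:up_bd}, the discussion preceding the theorem explains that~(c) is immediate from the Lov\'asz--Kneser Theorem plus the step inequality $\chi_{k'}(G)\ge\chi_{k'-1}(G)+2$, and~(a) is a special case of the paper's Theorem~\ref{thm:main1} (for $n=2k+1$ one has $k/(n-2k)=k$, so every $r$ is admissible; for $n=2k$ the floor term in Theorem~\ref{thm-main} vanishes). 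So on those three parts you and the paper agree.

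For part~(d) there is nothing to compare against: the paper does not reprove the cases $k=2,3$ but simply cites Stahl~\cite{stahl1998multichromatic}. Your reduction to $r=k-1$ is the right first step and is exactly the reduction the paper records, and your diagnosis that fractional/weighting methods cannot close the gap is also correct and consistent with the paper's viewpoint. However, your outlined inductive scheme for $k=2,3$ --- delete a ground-set element, apply induction to $K(n-1,k)$, and argue that the sets through~$x$ force $q$ extra colours --- is not how Stahl's 1998 proof proceeds, and the place where you say ``making this forcing precise is the delicate step'' is a genuine gap rather than a routine detail: the non-star (triangle or Hilton--Milner-type) classes can indeed recycle colours across the cut, and no simple count rules this out. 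Stahl's actual argument for $k=2,3$ is a more intricate case analysis specific to those small values. Since the paper itself does not reproduce it, your honest acknowledgement that~(d) is where the substantive work lies is the appropriate stance; just be aware that your sketched induction is a plausible heuristic, not a proof.
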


As remarked earlier, combining Theorem~\ref{thm-main} with~\eqref{eq:up_bd} gives a proof of Theorem~\ref{th1}\,(b) that is much simpler than the proof in~\cite{stahl1976n}.

\medskip
We can obtain further results by using more detailed knowledge about independent sets in Kneser graphs.
Erd\H{o}s, Ko and Rado~\cite{erdos_intersection_1961} proved that if $n\ge2k+1$, then the only independent sets of order $\dbinom{n-1}{k-1}$ in the Kneser graph $K(n,k)$ are the so-called \emph{trivial} independent sets: those vertex sets whose vertices correspond to family of $k$-sets in~$[n]$ that contain some fixed common element~$i\in[n]$.
Using that information about the structure of independent sets of order $\alpha(K(n,k))$, the following can be proved.

\begin{thm}\label{thm:eq_indep_bound}\mbox{}\\*
    Let $n,k$ be integers, $n\ge2k+1\ge3$. Then for all $k'\ge1$ we have equality in Observation~\ref{obs1} if and only if $k'$ is an integer multiple of $k$.
\end{thm}

The statement that $\chi_{pk}(K(n,k))=pn$ if and only if~$p$ is an integer is sometimes attributed to Stahl~\cite{stahl1976n} (see, e.g., \cite[Section~7.9]{godsil2001algebraic} and \cite{osztenyi2020proof}), but it is not explicitly stated in Stahl's paper.
Stahl's result \cite[Theorem~9]{stahl1976n} states that $K(n,k)$ has a \emph{particular} $(pn,pk)$-colouring (called ``efficient'' in the paper) if and only if~$p$ is an integer.  Nevertheless, its proof in essence proves the general result, as is made explicit in \cite[Lemma~7.9.3]{godsil2001algebraic} and its proof.

\medskip
In a follow-up paper~\cite{stahl1998multichromatic}, Stahl proved the following general lower bound for $\chi_{k'}(K(n,k))$:
\begin{equation}\label{eq2}
    \chi_{qk-r}(K(n,k))\ge qn-2r-(k^2-3k+4).
\end{equation}

The estimate in Theorem~\ref{thm-main} is better than~\eqref{eq2} if $n\le k^2+2$.

\medskip
As already observed by Geller \& Stahl~\cite{geller1975chromatic}, for any graph $G$ we have $\chi_{k'}(G) = \chi(G\lexp K_{k'})$, where ``$\bullet$'' denotes the \emph{lexicographic product} of two graphs: $V(G\lexp H) = V(G)\times V(H)$, and $(u_1,v_1)(u_2,v_2)\in E(G\lexp H)$ if and only if either $u_1u_2\in E(G)$, or $u_1=u_2$ and $v_1v_2\in E(H)$.
This allows us to translate the problem of finding multi-chromatic numbers to finding chromatic numbers. 
Since we also have that $|V(G\lexp K_{k'})|=k'|V(G)|$ and $\alpha(G\lexp K_{k'})=\alpha(G)$, this gives an alternative proof of $\chi_{k'}(G)\ge\dfrac{k'|V(G)|}{\alpha(G)}$.

\section{Proofs of Our Results} \label{multi_sec:proofs}

This section contains the proofs of the results from Section~\ref{multi_sec:intro}.
Throughout this section we use~$k'$ and $qk-r$ interchangeably (i.e.\ $k'=qk-r$), where $q\ge 1$ and $0\le r\le k-1$.

\subsection{Proof and Discussion of Theorems~\ref{thm-split} and~\ref{thm:main1}}\label{ssec3.1}

We first prove Theorem \ref{thm:main1}, which states that Stahl's Conjecture is true for $0\le r \le \dfrac{k}{n-2k}$. Note that this result generalises the following known results.

(a) \textit{Stahl's Conjecture is true if $k'$ is a multiple of $k$, i.e.\ if $r=0$ (Stahl~\cite{stahl1976n}).}

(b) \textit{Stahl's Conjecture is true if $2k<n<3k$ and $0\le r<\dfrac{k}{n-2k}$ (Oszt\'enyi~\cite{osztenyi2020proof})}.

\begin{repthm}[\ref{thm:main1}]\mbox{}\\*
Let $n,k,r$ be integers, $n\ge2k+1\ge3$ and $0\le r\le\dfrac{k}{n-2k}$. Then for all $q\ge1$ we have $\chi_{qk-r}(K(n,k))= qn-2r$.
\end{repthm}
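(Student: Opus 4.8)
The plan is to prove this by establishing the matching lower bound $\chi_{qk-r}(K(n,k))\ge qn-2r$, since the upper bound $\chi_{qk-r}(K(n,k))\le qn-2r$ is already given by Stahl's inequality~\eqref{eq:up_bd}. For the lower bound, I would start from Theorem~\ref{thm-main}, which gives
\[
    \chi_{qk-r}(K(n,k))\ge qn-2r-\Bigl\lfloor\dfrac{r(n-2k)}{k}\Bigr\rfloor.
\]
The key observation is that the hypothesis $0\le r\le\dfrac{k}{n-2k}$ is exactly what forces the floor term to vanish: it says $r(n-2k)\le k$, hence $0\le\dfrac{r(n-2k)}{k}\le1$, and so $\Bigl\lfloor\dfrac{r(n-2k)}{k}\Bigr\rfloor$ is either $0$ or (only in the boundary case $r(n-2k)=k$) possibly $1$. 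So the first step is to handle the generic case where $r(n-2k)<k$ strictly, where the floor is $0$ and we are immediately done.

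The main obstacle is therefore the boundary case $r(n-2k)=k$, i.e.\ when $\dfrac{r(n-2k)}{k}=1$ exactly, where Theorem~\ref{thm-main} only gives $\chi_{qk-r}(K(n,k))\ge qn-2r-1$, one short of what we want. To close this gap I would argue that $\chi_{qk-r}(K(n,k))$ cannot equal $qn-2r-1$, using a parity-type or divisibility argument. The natural tool is Observation~\ref{obs1} itself: we have $\chi_{qk-r}(K(n,k))\ge\dfrac{(qk-r)n}{k}=qn-\dfrac{rn}{k}$, and $\dfrac{rn}{k}$ need not be an integer, so rounding up may already rule out $qn-2r-1$. Concretely, in the boundary case $rn=r(2k)+r(n-2k)=2rk+k=(2r+1)k$, so $\dfrac{rn}{k}=2r+1$ is an integer, giving only $\chi_{qk-r}(K(n,k))\ge qn-2r-1$ — so this crude bound does not suffice either, and a finer argument is needed.

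For the boundary case I would instead invoke a congruence result on $\chi_{k'}$ modulo something, or exploit the structure more carefully. One promising route: combine the homomorphism $K(n,k)\to K(n-2,k-1)$ of Lemma~\ref{sta:prop} with an induction on $r$. If $\chi_{qk-r}(K(n,k))=qn-2r-1$, then since $K(n,k)$ is not edgeless we have $\chi_{(q-1)k-(r-1)}\bigl(K(n,k)\bigr)\le\chi_{qk-r}(K(n,k))-2$ is the wrong direction; rather I would use that reducing $k'$ by $1$ decreases $\chi_{k'}$ by at least $2$, together with the fact (Theorem~\ref{th1}(b)) that $\chi_{qk}(K(n,k))=qn$ exactly, to get $\chi_{qk-r}(K(n,k))\ge qn-2r$ by stepping down from $k'=qk$ in unit steps — but this again only yields the right bound if each step is exactly $2$, which is itself essentially the conjecture. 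So the cleanest approach to the boundary case is likely a direct counting refinement: in the boundary case, equality in Observation~\ref{obs1}'s derivation $k'|V|\le n'\alpha$ would require \emph{every} colour class to be a maximum independent set, hence (by the Erd\H{o}s--Ko--Rado uniqueness used in Theorem~\ref{thm:eq_indep_bound}) a trivial independent set, and then the argument of Theorem~\ref{thm:eq_indep_bound} shows this is impossible unless $k\mid k'$, i.e.\ unless $r=0$; when $r\ne0$ this contradiction gives $\chi_{qk-r}(K(n,k))\ge qn-2r$. I expect verifying that the EKR-uniqueness argument applies cleanly at $n'=qn-2r-1$ (checking $n'$ is in the right range and the divisibility bookkeeping works out) to be the delicate point.
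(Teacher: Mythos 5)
Your proposal is correct and follows essentially the same route as the paper: the case $r(n-2k)<k$ is dispatched by Theorem~\ref{thm-main} together with~\eqref{eq:up_bd}, and the boundary case $r(n-2k)=k$ is ruled out by noting that $\chi_{qk-r}(K(n,k))=qn-2r-1=\frac{(qk-r)n}{k}$ would force equality in Observation~\ref{obs1}, which by Theorem~\ref{thm:eq_indep_bound} (valid here since $n\ge2k+1$) requires $k\mid qk-r$, impossible for $1\le r\le k-1$. The delicate point you flag is not an issue: one simply cites the statement of Theorem~\ref{thm:eq_indep_bound} rather than re-running the Erd\H{o}s--Ko--Rado uniqueness argument.
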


\begin{proof}
If $0\le r<\dfrac{k}{n-2k}$, then $\Bigl\lfloor\dfrac{r(n-2k)}{k}\Bigr\rfloor=0$, and hence Theorem~\ref{thm-main} immediately shows that $\chi_{qk-r}(K(n,k))\ge qn-2r$. We then have $\chi_{qk-r}(K(n,k))=qn-2r$ by~\eqref{eq:up_bd}.

If $r=\dfrac{k}{n-2k}$ is an integer and $0<r\le k-1$, then $\dfrac{k'n}{k}= \dfrac{(qk-r)n}{k}= qn-\dfrac{n}{n-2k}= qn-2r-1$ is an integer. 
But as $qk-r$ is not a multiple of~$k$, since $1\le r\le k-1$, by Theorem~\ref{thm:eq_indep_bound} we have that $K(n,k)$ is not $(qn-2r-1, qk-r)$-colourable.
We can conclude that $\chi_{qk-r}(K(n,k))= qn-2r$ by~\eqref{eq:up_bd}.
\end{proof}

The result in~\cite{osztenyi2020proof} mentioned above follows from the following lower bound in that paper, obtained after a long and quite elaborate proof.

\begin{thm}[Oszt\'enyi {\cite[Proposition 5]{osztenyi2020proof}}]\label{osz-thm}\mbox{}\\*
Let $n,k,\ell$ be integers, $k,\ell\ge2$ and $\ell k<n<2\ell k$.
Then for all $q\ge 1$ and $r$, $0\le r\le k-1$, we have $\chi_{qk-r}(K(n,k))\ge qn-\ell r-c+1$, where $c$ is a positive integer satisfying $c>\dfrac{\ell r-1}{\lceil\frac{\ell k}{n-\ell k}\rceil-1}$.
\end{thm}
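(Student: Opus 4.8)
\textbf{Proof proposal for Theorem~\ref{osz-thm} (Oszt\'enyi's lower bound).}

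The plan is to mimic the strategy behind the original proof of Stahl's bound~\eqref{eq2}, but to track the ``loss'' more carefully using the hypothesis $\ell k < n < 2\ell k$, which forces a clean relationship between~$n$ and~$k$. First I would observe that the condition $\ell k<n<2\ell k$ means that $\bigl\lceil\frac{\ell k}{n-\ell k}\bigr\rceil\ge 2$, so the quantity $c>\dfrac{\ell r-1}{\lceil\frac{\ell k}{n-\ell k}\rceil-1}$ is well defined and we have some slack to play with. The key structural input should be a homomorphism (or a chain of homomorphisms) from $K(n,k)$ into a Kneser graph with a ratio $n'/k'$ strictly larger than $n/k$; composing Lemma~\ref{sta:prop} repeatedly gives homomorphisms $K(n,k)\to K(n-2,k-1)\to\cdots$, and after $j$ steps we land in $K(n-2j,k-j)$, whose independence-number bound (Observation~\ref{obs1}) yields $\chi_{k'}(K(n,k))\ge\chi_{k'}(K(n-2j,k-j))\ge\dfrac{k'(n-2j)}{k-j}$ for any valid~$j$. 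The art is to choose~$j$ (depending on $r$, $k$, $\ell$) so that this bound, after taking the ceiling, beats the naive $qn-2r$ and delivers exactly $qn-\ell r-c+1$.

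The main steps in order would be: (i) fix the optimal number $j$ of applications of Lemma~\ref{sta:prop} — heuristically $j\approx r$, so that $k-j\approx k-r$ and $k'=qk-r$ is close to a multiple of $k-j$; (ii) compute $\dfrac{k'(n-2j)}{k-j}$ and expand it as $qn-(\text{something})\cdot r + (\text{correction})$, isolating the coefficient of $r$ as (essentially) $\dfrac{n-2j}{k-j}\cdot\dfrac{k'}{k'}$-type arithmetic, and show this coefficient is at most $\ell$ under the constraint $n<2\ell k$; (iii) control the rounding: since $\chi_{k'}$ is an integer we may take a ceiling, and here is where the denominator $\lceil\frac{\ell k}{n-\ell k}\rceil-1$ enters — it measures how finely $\dfrac{k'(n-2j)}{k-j}$ can fail to be an integer, hence how large $c$ must be taken before the ceiling of the fractional bound is guaranteed to reach $qn-\ell r-c+1$; (iv) assemble the inequality, using $q\ge1$ only through the fact that $k'=qk-r\ge k-r\ge1$ and that all the Kneser graphs involved are non-edgeless (which needs $n-2j\ge 2(k-j)$, i.e.\ $n\ge 2k$, automatic here).

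The hard part will be step~(iii): getting the constant~$c$ exactly right. The fractional lower bound $\dfrac{k'(n-2j)}{k-j}$ generally is not an integer, and a crude ceiling only gains~$1$; to extract a bound of the form $qn-\ell r-c+1$ one must understand the arithmetic of $k'(n-2j)\bmod (k-j)$ across all admissible~$q$, and show the worst case is governed by $\lceil\frac{\ell k}{n-\ell k}\rceil$. I expect this to require a careful case analysis on the residue of $n$ modulo $k-j$ (equivalently on where $n$ sits in the interval $(\ell k,2\ell k)$), together with the elementary fact that for integers $a,b$ with $b\ge1$ one has $\bigl\lceil\frac{a}{b}\bigr\rceil = \frac{a + (b-(a\bmod b))\bmod b}{b}$. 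An alternative, possibly cleaner route is to apply Theorem~\ref{thm-main} (or Theorem~\ref{thm-split}) directly to $K(n-2j,k-j)$ rather than re-deriving the floor estimate, and then optimise over~$j$; but either way the delicate point is matching the rounding error to the stated expression for~$c$, and I would treat the general $\ell$ exactly as Stahl treated $\ell=2$, simply carrying the parameter~$\ell$ through the same computation.
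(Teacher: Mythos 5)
The paper does not actually prove Theorem~\ref{osz-thm}: it is quoted from Oszt\'enyi's paper (where it has ``a long and quite elaborate proof''), and what the authors do here instead is show that their Theorem~\ref{thm-main} always gives a bound at least as good, via the purely arithmetic comparison that follows the theorem statement. Your proposal, however, contains a fatal error in its central step. A homomorphism $K(n,k)\to K(n-2j,k-j)$ transports colourings \emph{backwards}: if $K(n-2j,k-j)$ is $(n',k')$-colourable, then composing gives an $(n',k')$-colouring of $K(n,k)$, hence $\chi_{k'}(K(n,k))\le\chi_{k'}(K(n-2j,k-j))$ --- the opposite of the inequality $\chi_{k'}(K(n,k))\ge\chi_{k'}(K(n-2j,k-j))$ on which your whole chain rests. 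The error is visible numerically: with your choice $j=r$ one has
\[
\frac{(qk-r)(n-2r)}{k-r}\;=\;qn-2r+\frac{(q-1)r(n-2k)}{k-r},
\]
which for $q\ge2$, $r\ge1$ and $n>2k$ is strictly larger than the proven upper bound $\chi_{qk-r}(K(n,k))\le qn-2r$ from~\eqref{eq:up_bd}, so it cannot possibly be a valid lower bound. Everything downstream --- the optimisation over $j$ and the rounding analysis in your step (iii) --- is built on this false inequality and cannot be repaired within that framework. (Induced-subgraph lower bounds do exist, namely $\chi_{k'}(K(n,k))\ge\chi_{k'}(K(m,k))$ for $m\le n$ with the \emph{same} $k$, as used in Lemma~\ref{lem-split}; but $K(n-2j,k-j)$ is not a subgraph of $K(n,k)$.)

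If you want a proof of Theorem~\ref{osz-thm} in the spirit of this paper, no homomorphism machinery is needed: apply Observation~\ref{obs1} directly to $K(n,k)$ itself to get $\chi_{qk-r}(K(n,k))\ge qn-2r-\bigl\lfloor r(n-2k)/k\bigr\rfloor$ (Theorem~\ref{thm-main}), and then check the elementary arithmetic fact that $qn-\ell r-c+1\le qn-2r-\bigl\lfloor r(n-2k)/k\bigr\rfloor$ for every positive integer $c>\dfrac{\ell r-1}{\lceil\frac{\ell k}{n-\ell k}\rceil-1}$ under the hypothesis $\ell k<n<2\ell k$; that comparison is exactly the short contradiction argument the authors give immediately after stating the theorem, and it derives Oszt\'enyi's bound as a corollary rather than re-proving it from scratch.
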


We will show that the bound in Theorem~\ref{thm-main} is already at least as good as this bound, by proving that we never have $qn-\ell r-c+1>qn-2r-\Bigl\lfloor\dfrac{r(n-2k)}{k}\Bigr\rfloor$ for integers $n,k,\ell,c$ satisfying the conditions in Theorem~\ref{osz-thm}.
Since $\ell\ge2$, it is more than enough to show that for $\ell k< n<2\ell k$ there is no positive integer~$c$ such that $\dfrac{\ell r-1}{\lceil\frac{\ell k}{n-\ell k}\rceil -1}<c<\Bigl\lfloor\dfrac{r(n-2k)}{k}\Bigr\rfloor+1$.
For a contradiction, assume such a $c$ exists for some $n,k,r,\ell$.
Then since $c$ is an integer, we have
\begin{equation}\label{eq:compare_osz_2}
    ck\le r(n-\ell k).
\end{equation}
Since $n<2\ell k$ is equivalent to $n-\ell k\le\ell k-1$ for integers, we have
$\Bigl\lceil\dfrac{\ell k}{n-\ell k}\Bigr\rceil-1\ge \Bigl\lceil\dfrac{\ell k}{\ell k-1}\Bigr\rceil- 1 >0$.
Using this in $\dfrac{\ell r-1}{\lceil\frac{\ell k}{n-\ell k}\rceil -1}<c$ gives $c\Bigl\lceil\dfrac{\ell k}{n-\ell k}\Bigr\rceil-c>\ell r-1$.
We can rearrange this to $\Bigl\lceil \dfrac{\ell k}{n-\ell k}\Bigr\rceil> \dfrac{\ell r+c-1}{c}$, and hence to $\Bigl\lceil \dfrac{\ell k}{n-\ell k}\Bigr\rceil\ge \dfrac{\ell r+c}{c}= \dfrac{\ell r}{c}+1$. 
Since $\Bigl\lceil\dfrac{\ell k}{n-\ell k}\Bigr\rceil$ is an integer, we can conclude that $\dfrac{\ell k}{n-\ell k}> \dfrac{\ell r}{c}$, which gives $ck>r(n-\ell k)$, contradicting~\eqref{eq:compare_osz_2}.\linebreak\mbox{}\hfill\qedsymbol

\medskip
On the other hand, there are values of $n,k,r$ for which Theorem~\ref{thm-main} gives a better bound than Theorem~\ref{osz-thm}.
For instance if $n=137$, $k=56$ and $r=31$, then Theorem~\ref{thm-main} gives $\chi_{56q-31}(K(137,56))\ge137q-75$, whereas in Theorem~\ref{osz-thm} we need to take $\ell=2$ and $c>15.25$, which gives at best the bound $\chi_{56q-31}(K(137,56))\ge137q-77$.
And for $n=145$, $k=30$ and $r=17$, Theorem~\ref{thm-main} gives $\chi_{30q-17}(K(145,30))\ge145q-82$, whereas in Theorem~\ref{osz-thm} we need to take $\ell=3$ and $c>50$, or $\ell=4$ and $c>16.75$, which gives at best the bound $\chi_{30q-17}(K(145,30))\ge145q-140$.

\medskip
In the remainder of this subsection, we prove and discuss Theorem~\ref{thm-split}.

\begin{repthm}[\ref{thm-split}]\mbox{}\\*
    Let $n,k,r,q$ be integers, $n\ge2k\ge2$, $q\ge1$ and $0\le r\le k-1$.
    Choose $n_1,\ldots,n_t$ such that $n=\sum\limits_{i=1}^tn_i$ and $2k\le n_i<4k$ for all~$i$.
    Then we have $\chi_{qk-r}(K(n,k))\ge qn-\sum\limits_{i=1}^t\Bigl\lfloor\dfrac{n_ir}{k}\Bigr\rfloor$.
\end{repthm}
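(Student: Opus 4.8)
The plan is to realise a suitable disjoint union of ``small'' Kneser graphs as an induced subgraph of $K(n,k)$, to observe that this subgraph is in fact a \emph{join}, and then to use that the multi-chromatic number is additive over joins together with the already-proved bound of Theorem~\ref{thm-main} for each small piece.

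First I would fix the partition. Since $n=\sum_{i=1}^t n_i$, choose pairwise disjoint sets $B_1,\ldots,B_t\subseteq[n]$ with $|B_i|=n_i$ and $\bigcup_{i=1}^t B_i=[n]$. Let $H$ be the subgraph of $K(n,k)$ induced on the vertex set $\bigcup_{i=1}^t\binom{B_i}{k}$ (these sets are pairwise disjoint, as a $k$-set lying in both $B_i$ and $B_j$ would lie in $B_i\cap B_j=\varnothing$). Two $k$-sets contained in the same block $B_i$ are adjacent in $H$ precisely when they are disjoint, so $H$ restricted to $\binom{B_i}{k}$ is a copy of $K(n_i,k)$; and two $k$-sets lying in different blocks are automatically disjoint, hence adjacent. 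Therefore $H$ is the join $K(n_1,k)\ast\cdots\ast K(n_t,k)$, i.e.\ the disjoint union of the $K(n_i,k)$ with \emph{all} edges between distinct copies added.

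The key step is additivity of multi-colouring numbers over a join: for any graphs $G_1,G_2$ and any $k'\ge1$ we have $\chi_{k'}(G_1\ast G_2)\ge\chi_{k'}(G_1)+\chi_{k'}(G_2)$. Indeed, given an $(N,k')$-colouring of $G_1\ast G_2$ with $N=\chi_{k'}(G_1\ast G_2)$, let $A$ (resp.\ $B$) be the set of colours that appear on some vertex of $G_1$ (resp.\ $G_2$). Every vertex of $G_1$ is adjacent to every vertex of $G_2$, and adjacent vertices receive disjoint colour sets, so $A\cap B=\varnothing$; restricting the colouring to $G_i$ shows $|A|\ge\chi_{k'}(G_1)$ and $|B|\ge\chi_{k'}(G_2)$, whence $N\ge|A|+|B|\ge\chi_{k'}(G_1)+\chi_{k'}(G_2)$. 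Iterating over the $t$ copies gives $\chi_{k'}(H)\ge\sum_{i=1}^t\chi_{k'}(K(n_i,k))$. Now assemble: since $H$ is a subgraph of $K(n,k)$ we have $\chi_{k'}(K(n,k))\ge\chi_{k'}(H)$; each $n_i$ satisfies $n_i\ge2k\ge2$, so Theorem~\ref{thm-main} applies to $K(n_i,k)$ and yields $\chi_{qk-r}(K(n_i,k))\ge qn_i-\bigl\lfloor n_ir/k\bigr\rfloor$; summing over $i$ and using $\sum_i n_i=n$ gives $\chi_{qk-r}(K(n,k))\ge\sum_{i=1}^t\bigl(qn_i-\lfloor n_ir/k\rfloor\bigr)=qn-\sum_{i=1}^t\lfloor n_ir/k\rfloor$, as claimed. (The hypothesis $n_i<4k$ is not actually used in the argument; it merely records that one should take the blocks small to make the bound as strong as possible.)

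There is no deep obstacle here: the entire content is recognising that the ``block'' subgraph is a join and that $\chi_{k'}$ is superadditive on joins. The only points requiring a little care are verifying that the induced subgraph on $\bigcup_i\binom{B_i}{k}$ really contains all edges between distinct blocks (so that it is a genuine join), and checking that the hypotheses of Theorem~\ref{thm-main} hold for every $n_i$ — both of which are immediate from $n_i\ge2k\ge2$.
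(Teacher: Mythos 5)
Your proof is correct and takes essentially the same approach as the paper: partition the ground set $[n]$ into blocks, observe that the induced copies of $K(n_i,k)$ are pairwise completely joined so that $\chi_{k'}$ is superadditive over the pieces, and apply Theorem~\ref{thm-main} to each block. Your remark that the hypothesis $n_i<4k$ is not needed for the inequality itself also matches the paper, which proves a more general lemma for arbitrary block sizes and only afterwards justifies restricting to $2k\le n_i<4k$ as the range in which the best bounds are attained.
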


In fact, this theorem follows directly from the following more technical lemma.

\begin{mylemma}\label{lem-split}\mbox{}\\*
    Let $k,r,q$ be integers, $n\ge2k\ge2$, $q\ge1$ and $0\le r\le k-1$.
    Let $n_1,\ldots,n_t$ be positive integers such that $n=\sum\limits_{i=1}^tn_i$, and let $I_1,I_2,I_3\subseteq[t]$ be the sets of indices such that for $i\in I_1$ we have $n_i<k$, for $i\in I_2$ we have $k\le n_i<2k$, and for $i\in I_3$ we have $n_i\ge2k$.
    Then we have $\chi_{qk-r}(K(n,k))\ge |I_2|\cdot(qk-r)+\sum\limits_{i\in I_3}\Bigl(qn_i-\Bigl\lfloor\dfrac{n_ir}{k}\Bigr\rfloor\Bigr)$.
\end{mylemma}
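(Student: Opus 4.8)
The plan is to split the Kneser graph $K(n,k)$ into vertex-disjoint subgraphs indexed by the partition $n=\sum_{i=1}^t n_i$, apply a counting argument to each piece, and add up the resulting bounds. Concretely, fix a partition of the ground set $[n]$ into blocks $B_1,\ldots,B_t$ with $|B_i|=n_i$. For each $i$, let $G_i$ be the subgraph of $K(n,k)$ induced by those $k$-subsets of $[n]$ that are entirely contained in $B_i$; then $G_i$ is isomorphic to $K(n_i,k)$. The crucial point is that the $G_i$ are pairwise ``non-adjacent'' in a weak sense only — a vertex of $G_i$ and a vertex of $G_j$ may well be adjacent in $K(n,k)$ — so we cannot simply combine colourings. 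Instead I would work directly with an arbitrary $(n',qk-r)$-colouring $c$ of $K(n,k)$ and count, for each colour $\lambda\in[n']$, how it is used.

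The key step is the following observation about colour classes. Let $c$ be any proper $(qk-r)$-multi-colouring of $K(n,k)$ with colour set $[n']$. For a colour $\lambda$, its colour class $C_\lambda=\{v : \lambda\in c(v)\}$ is an independent set in $K(n,k)$, hence an intersecting family of $k$-sets in $[n]$. Restricting to block $B_i$, the set $C_\lambda\cap V(G_i)$ is an intersecting family of $k$-subsets of $B_i$. Now I sum over all vertices $v\in V(G_i)\cong\binom{B_i}{k}$: each such $v$ receives $qk-r$ colours, so $\sum_{\lambda\in[n']}|C_\lambda\cap V(G_i)| = (qk-r)\binom{n_i}{k}$. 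This is exactly the setup of Observation~\ref{obs1} / Theorem~\ref{thm-main} applied inside $G_i\cong K(n_i,k)$, \emph{except} that a single colour $\lambda$ could in principle be used on several different blocks simultaneously. To make the bounds additive I would assign to each colour $\lambda$ the \emph{largest} $n_i$ among blocks on which $\lambda$ appears (ties broken arbitrarily), or more cleanly, argue block-by-block: restricting $c$ to $V(G_i)$ gives a proper multi-colouring of $K(n_i,k)$ using \emph{at most} $n'$ colours, so $n'\ge\chi_{qk-r}(K(n_i,k))$ for every $i$ — but that only gives the maximum, not the sum, so this naive approach is too weak and must be refined.

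The refinement, and the main obstacle, is to get \textbf{additivity} across the blocks rather than just a bound by the largest term. The right way is to double-count $\sum_\lambda (\text{something})$ over all colours at once. Here is the plan: for block $B_i$ with $n_i\ge 2k$ (i.e.\ $i\in I_3$), the Erdős--Ko--Rado theorem gives $|C_\lambda\cap V(G_i)|\le\binom{n_i-1}{k-1}$, so $(qk-r)\binom{n_i}{k}\le n'\cdot\binom{n_i-1}{k-1}$ would again only reprove Theorem~\ref{thm-main} for one block. Instead I would exploit that the colours used on $B_i$ need not be disjoint from those on $B_j$, but that if a colour $\lambda$ is used on \emph{both} $B_i$ and $B_j$ with $i\neq j$, then picking $v\in C_\lambda\cap V(G_i)$ and $w\in C_\lambda\cap V(G_j)$, the sets $v\subseteq B_i$ and $w\subseteq B_j$ are disjoint, so $vw$ is an \emph{edge} of $K(n,k)$ — contradicting that $C_\lambda$ is independent. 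Therefore \textbf{each colour is used on at most one block}, which immediately makes the bounds additive: partitioning $[n']$ into $N_1,\ldots,N_t,N_0$ according to which block (if any) a colour appears on, we get $|N_i|\ge\chi_{qk-r}(K(n_i,k))$ when $V(G_i)$ is coloured using only colours from $N_i$, so $n'=\sum_i|N_i|+|N_0|\ge\sum_i\chi_{qk-r}(K(n_i,k))$, and then Theorem~\ref{thm-main} (applied to each $K(n_i,k)$ with $n_i\ge 2k$, giving $\chi_{qk-r}(K(n_i,k))\ge qn_i-\lfloor n_ir/k\rfloor$) and the trivial bound $\chi_{qk-r}(K(n_i,k))\ge qk-r$ for $k\le n_i<2k$ (where $K(n_i,k)$ is edgeless but the multi-colouring still needs $qk-r$ distinct colours on its one... more precisely $\ge qk-r$) yield exactly the claimed inequality of Lemma~\ref{lem-split}.

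Thus the proof outline is: (1) fix a partition $[n]=B_1\sqcup\cdots\sqcup B_t$ with $|B_i|=n_i$ and let $G_i$ be the induced subgraph on $k$-subsets of $B_i$, noting $G_i\cong K(n_i,k)$ and that for $i\in I_1$ (with $n_i<k$) $G_i$ is empty; (2) take an optimal $(n',qk-r)$-colouring $c$ of $K(n,k)$ with $n'=\chi_{qk-r}(K(n,k))$; (3) show each colour $\lambda$ appears on vertices of at most one $G_i$, using that two vertices from distinct blocks are disjoint hence adjacent; (4) partition the colour set accordingly and sum the per-block lower bounds, invoking Theorem~\ref{thm-main} for $i\in I_3$ and the count-of-colours bound $qk-r$ for $i\in I_2$, discarding $I_1$ since those blocks carry no vertices (or only contribute nonnegatively); (5) conclude $n'\ge|I_2|(qk-r)+\sum_{i\in I_3}(qn_i-\lfloor n_ir/k\rfloor)$. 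The step I expect to require the most care in writing up is step (3)–(4): one must be slightly careful that when $n_i<k$ the block is vacuous and contributes nothing, and that the ``at most one block'' argument genuinely partitions the colours so the bounds add rather than merely being dominated by the max; everything else is a direct application of Theorem~\ref{thm-main}. Theorem~\ref{thm-split} itself then follows by choosing all $n_i$ in the range $2k\le n_i<4k$, so that $I_1=I_2=\varnothing$ and $I_3=[t]$.
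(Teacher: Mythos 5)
Your proof is correct and is essentially the paper's own argument: the paper phrases the key step as a complete bipartite join between the subgraphs induced by $k$-subsets of disjoint blocks, which forces their colour sets to be disjoint and hence makes the per-block bounds from Theorem~\ref{thm-main} add up; your observation that each colour class meets at most one block is exactly this same fact. The handling of the three index classes $I_1$, $I_2$, $I_3$ also matches the paper.
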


The proof is based on the idea of partitioning $K(n,k)$ into suitable subgraphs by splitting the ground set $[n]$.
Note that for all~$m$ with $k\le m\le n-k$, the subgraph of $K(n,k)$ induced by $\Bigl\{F\in\dbinom{[n]}{k}\Bigm\vert F\subseteq[m]\Bigr\}$ is isomorphic to $K(m,k)$, and the subgraph of $K(n,k)$ induced by $\Bigl\{F\in\dbinom{[n]}{k}\Bigm\vert F\subseteq[m+1,n]\Bigr\}$ is isomorphic to $K(n-m,k)$.
Moreover, since the vertices of these two subgraphs are disjoint subsets of $[n]$, there is a complete bipartite join between them in $K(n,k)$.

Setting $k'=qk-r$, these observations immediately give for $k\le m\le n-k$:
\[\chi_{k'}(K(n,k))\ge\chi_{k'}(K(m,k))+\chi_{k'}(K(n-m,k)).\]
Moreover, for $m<k$ or $m>n-k$ we have $\chi_{k'}(K(m,k))=0$, which means that the inequality above in fact holds for $0\le m\le n$.

If $0\le n_i<k$, then $K(n_i,k)$ is empty, so $\chi_{qk-r}(K(n_i,k))=0$; while if $k\le n_i<2k$, then $K(n_i,k)$ has no edges, so $\chi_{qk-r}(K(n_i,k))= qk-r$.
Finally, for $n_i\ge2k$ Theorem~\ref{thm-main} gives $\chi_{qk-r}(K(n_i,k))\ge qn_i-\Bigl\lfloor\dfrac{n_ir}{k}\Bigr\rfloor$.
Combining it all we obtain
\[\chi_{qk-r}(K(n,k))\ge \sum_{i=1}^t \chi_{qk-r}(K(n_i,k))\ge |I_1|\cdot0+|I_2|\cdot(qk-r)+\sum\limits_{i\in I_3}\Bigl(qn_i-\Bigl\lfloor\dfrac{n_ir}{k}\Bigr\rfloor\Bigr),\]
completing the proof of the lemma.\hfill\qedsymbol

\medskip
Note that Lemma~\ref{lem-split} allows for any value of the $n_i$'s, whereas Theorem~\ref{thm-split} considers $2k\le n_i<4k$ only.
To justify this restriction, we show that the best bound in the lemma with $n=n_1+\cdots+n_t$ can always be obtained by taking $2k\le n_i<4k$.
First, if we have some $n_i$ with $n_i<k$, then since we know $\chi_{qk-r}(K(n_i,k))=0$, adding this $n_i$ to another $n_j$ never will give a worse bound.

Next assume we have some $n_i$ with $k\le n_i<2k$, but none with $n_i<k$.
If there are two such $n_i$, say $n_i$ and $n_j$, then it is easy to check that replacing them by $n_i+n_j$ (where $2k\le n_i+n_j<4k$) gives at least as good a bound.
If there is only one $n_i$ with $k\le n_i<2k$, then, since $n\ge2k$, there must be an $n_j$ with $n_j\ge2k$.
These two parts give $qk-r+qn_j-\Bigl\lfloor\dfrac{n_jr}{k}\Bigr\rfloor$ to the bound.
Replacing them by $n_i+n_j$, where we have $2k\le n_i+n_j$, replaces this contribution by $q(n_i+n_j)-\Bigl\lfloor\dfrac{(n_i+n_j)r}{k}\Bigr\rfloor$.
We can estimate
\begin{align*}
    q(n_i+n_j)-\Bigl\lfloor\dfrac{(n_i+n_j)r}{k}\Bigr\rfloor&\ge q(n_i+n_j)-\dfrac{(n_i+n_j)r}{k}\\
    &> \dfrac{n_i}{k}(qk-r)+qn_j-\Bigl\lfloor\dfrac{n_jr}{k}\Bigr\rfloor-1\ge qk-r+qn_j-\Bigl\lfloor\dfrac{n_jr}{k}\Bigr\rfloor -1.
\end{align*}
And since $q, k, r, n_i, n_j$ are all integers, we can conclude that $q(n_i+n_j)-\Bigl\lfloor\dfrac{(n_i+n_j)r}{k}\Bigr\rfloor\ge qk-r+qn_j-\Bigl\lfloor\dfrac{n_jr}{k}\Bigr\rfloor$, 
justifying the replacement.

Finally, assume we have some $n_i\ge4k$.
Then we can take $n_i=n_{i'}+n_{i''}$ with $n_{i'},n_{i''}\ge2k$.
Since $\Bigl\lfloor\dfrac{n_{i'}r}k\Bigr\rfloor+\Bigl\lfloor\dfrac{n_{i''}r}k\Bigr\rfloor\le\Bigl\lfloor\dfrac{(n_{i'}+n_{i''})r}k\Bigr\rfloor$, we find that splitting $n_i\ge4k$ always gives at least as good a bound.

\medskip
For many values of $n$ and $k$ there will be multiple ways to write $n=n_1+\cdots+n_t$ for $2k\le n_i<4i$.
Although intuitive one would expect that smaller values of~$n_i$ give better bounds because of the rounding in the term $\Bigl\lfloor\dfrac{n_ir}{k}\Bigr\rfloor$, this does not in general give the best bound.
For instance, if $n=76$, $k=7$ and $r=4$ (and $q$ can be anything), the choice $n_1=n_2=n_3=n_4=15$ and $n_5=16$, gives the bound $\chi_{7q-4}(K(76,7))\ge76q-41$.
But if we take $n_1=n_2=n_3=n_4=19$, we get $\chi_{7q-4}(K(76,7))\ge76q-40$.

Another intuitive idea is that the best bounds are obtained if the $n_i$ are similar in size (so they give roughly the same term $\Bigl\lfloor\dfrac{n_ir}{k}\Bigr\rfloor$.
Also that idea appeared to be wrong.
For instance, if $n=79$, $k=10$ and $r=6$ (again $q$ can be anything), the best bound is found by taking $n_1=23$ and $n_2=n_3=28$, or by taking $n_1=n_2=23$ and $n_3=33$.

We did quite extensive computations of bounds that can be obtained using Theorem~\ref{thm-split}, but weren't able to discover a pattern for what would be the optimal choice of the $n_i$'s for different values of $n,k,r$.

\subsection{Proof and Discussion of Theorem \ref{thm:main2}}

Theorem \ref{thm:main2} is a corollary of the following two results. 

\begin{mylemma}\label{thm:improve_stahl_large_q}\mbox{}\\* 
    For all $k\ge2$ and $n\ge2k+1$ there exist $q_0(n,k)$, such that if $q\ge q_0(n,k)+1$, then we have $\chi_{qk-r}(K(n,k))\ge n+\chi_{(q-1)k-r}(K(n,k))$ for all $0\le r\le k-1$. 
\end{mylemma}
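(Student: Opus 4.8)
The goal is to show that for $q$ large enough, adding one more "layer" of Kneser colouring forces the multichromatic number to grow by at least $n$ at each step, i.e. $\chi_{qk-r}(K(n,k)) \ge n + \chi_{(q-1)k-r}(K(n,k))$. The plan is to argue contrapositively through colour classes: suppose $K(n,k)$ is $(n', qk-r)$-colourable with $n'$ not much larger than $\chi_{(q-1)k-r}(K(n,k))$. Each colour class is an independent set, so has at most $\alpha(K(n,k)) = \binom{n-1}{k-1}$ vertices, with equality only for the $n$ trivial (star) families by Erd\H{o}s--Ko--Rado. The key quantitative input will be a \emph{stability} statement: an independent set in $K(n,k)$ that is not trivial has size at most $\binom{n-1}{k-1} - \binom{n-k-1}{k-1} + 1$ (the Hilton--Milner bound), or at least something bounded away from $\binom{n-1}{k-1}$ by a quantity depending only on $n,k$. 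I would pin down an explicit such gap $g(n,k) > 0$.

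First I would set up a counting identity. If $\varphi$ is an $(n', qk-r)$-colouring, summing colour-class sizes over the $n'$ colours gives $\sum_{c} |V_c| = (qk-r)\binom{n}{k}$. Write $s$ for the number of colours whose class is a trivial (maximum) independent set; the remaining $n'-s$ classes each have size at most $\binom{n-1}{k-1} - g(n,k)$. This yields an inequality of the form $(qk-r)\binom{n}{k} \le s\binom{n-1}{k-1} + (n'-s)\bigl(\binom{n-1}{k-1} - g(n,k)\bigr) = n'\binom{n-1}{k-1} - (n'-s)\,g(n,k)$. Dividing by $\binom{n-1}{k-1}$ and using $\binom{n}{k}/\binom{n-1}{k-1} = n/k$ gives $n' \ge \frac{(qk-r)n}{k} + \frac{(n'-s)g(n,k)}{\binom{n-1}{k-1}}$, so roughly $n' - \tfrac{(qk-r)n}{k}$ controls $n'-s$, the number of non-trivial classes. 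The point is that if $n'$ is close to the conjectured value $qn-2r$ then $n' - \tfrac{(qk-r)n}{k} = qn-2r - qn + \tfrac{rn}{k} = \tfrac{r(n-2k)}{k}$, a bounded quantity, so \emph{almost all} colours must use trivial classes.

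Next I would exploit the trivial classes directly. A trivial class for element $i \in [n]$ is exactly the set of $k$-subsets containing $i$; if colour $c$ has this as its class, then removing $c$ and deleting element $i$ from the ground set, every vertex $F$ with $i \notin F$ still carries its full colour set, while vertices with $i \in F$ have lost exactly the colour $c$. More usefully: the subgraph of $K(n,k)$ induced on $\{F : i \notin F\}$ is $K(n-1,k)$, and on it $\varphi$ restricts to an $(n'-1, qk-r)$-colouring — but that is the wrong direction. The right move is: group the trivial colours by which element they point to. If some element $i$ is the "centre" of at least $q$ trivial colour classes, say $c_1,\dots,c_q$, then \emph{every} vertex $F$ containing $i$ receives all of $c_1,\dots,c_q$; hence on the induced $K(n,k)\setminus\{F : i \in F\} \cong K(n-1,k)$ — no, rather: consider restricting $\varphi$ to all vertices and removing colours $c_1,\dots,c_q$. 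A cleaner formulation: the vertices avoiding $i$ form $K(n-1,k)$ and their colour sets avoid $\{c_1,\dots,c_q\}$ only partially. I would instead peel off one trivial colour at a time and track how the homomorphism to $K(n-2,k-1)$-type reductions interact, or alternatively reduce to $K(n,k) \to$ (disjoint union over elements). The cleanest route is probably: with $q$ trivial colours all centred at $i$, the map $F \mapsto F$ on $\{F : i \notin F\}$ together with the colour sets restricted to $[n']\setminus\{c_1,\dots,c_q\}$ shows $K(n-1,k)$ is $(n'-q, qk-r)$-colourable, and iterating/combining over several centres, plus Lemma~\ref{prop2.1}-style bookkeeping, should let me strip $n$ from $n'$ and $k$ from $qk-r$ simultaneously, landing on a $((q-1)k-r)$-colouring of $K(n,k)$ with at most $n'-n$ colours.

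The main obstacle, and the step I expect to be delicate, is the combinatorial pigeonholing on the trivial colour classes: I need to guarantee that when $n'$ is close to the conjectured value and $q$ is large, some element $i$ serves as the centre of \emph{at least $q$} trivial classes (or, more precisely, that the multiset of centres can be organized so that one can remove a configuration equivalent to a full copy of an $(n,k)$-colouring layer). The counting above shows the \emph{total} number of trivial classes is $\ge n' - \tfrac{r(n-2k)}{k} \approx qn - 2r$, spread over $n$ possible centres, so the average centre gets about $q$ of them — but I need a worst-case guarantee, and handling the deficit caused by the non-trivial classes and by uneven distribution among centres is exactly where $q_0(n,k)$ enters: I would choose $q_0(n,k)$ large enough (the paper's remarks suggest something like $q_0(n,k) < 4^k(n-2k)$, so presumably proportional to $(n-2k)$ times a function of $k$, consistent with the gap $g(n,k)$ being of order $\binom{n-k-1}{k-1}$) that the pigeonhole necessarily produces the required full layer. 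Getting the explicit bound on $q_0(n,k)$ out of this — rather than just "sufficiently large" — will require carefully propagating the Hilton--Milner gap through the division by $\binom{n-1}{k-1}$ and the element-by-element peeling.
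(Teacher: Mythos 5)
Your first half is on the right track and matches the paper's argument: fix an $(x,qk-r)$-colouring with $x\le qn-2r$, and use the Hilton--Milner bound to show that the number $y$ of non-trivial colour classes satisfies $y\le \frac{r(n-2k)\binom{n-1}{k-1}}{k\bigl(\binom{n-k-1}{k-1}-1\bigr)}$, a quantity independent of $q$. You also correctly identify the intended endgame: remove one trivial class centred at each $i\in[n]$, so that $n$ colours disappear and each vertex $F$ (having only $k$ elements) loses at most $k$ colours, yielding an $(x-n,(q-1)k-r)$-colouring. But there is a genuine gap exactly where you flag it, and your proposed fix aims at the wrong target. What this lemma needs is that \emph{every} element of $[n]$ is the centre of at least one trivial class; your pigeonhole ("some element is the centre of at least $q$ trivial classes", followed by restricting to $\{F: i\notin F\}\cong K(n-1,k)$ and deleting those $q$ colours) is the mechanism for the \emph{other} reduction, $\chi_{qk-r}(K(n-1,k))\le \chi_{qk-r}(K(n,k))-q$ (Lemma~\ref{thm:improve_CGJ_large_n}), not for peeling off a $k$-layer. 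Moreover, the global count you sketch (total trivial classes $\approx qn-2r$ spread over $n$ centres, so the average centre gets about $q$) cannot rule out the one bad scenario that breaks the layer-removal: all trivial classes could be centred at only $n-1$ of the elements, leaving one element with none.

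The missing idea is a second, \emph{localized} double count. Supposing no class is centred at $i^*$, count colour appearances only over $\mathcal{F}=\bigl\{F\in\binom{[n]}{k}\bigm\vert i^*\in F\bigr\}$: a trivial class centred elsewhere meets $\mathcal{F}$ in at most $\binom{n-2}{k-2}$ vertices, while a non-trivial class meets it in at most $\binom{n-1}{k-1}-\binom{n-k-1}{k-1}$. Since $x\cdot\binom{n-2}{k-2}$ falls short of the required $(qk-r)\binom{n-1}{k-1}$ by an amount growing linearly in $q$, the deficit must be absorbed by non-trivial classes, forcing $y$ to grow linearly in $q$ --- which contradicts the $q$-independent upper bound on $y$ once $q\ge q_0(n,k)+1$. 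Without this local count (or some substitute for it), your argument does not close, and the explicit form of $q_0(n,k)$ comes precisely from equating these two bounds on $y$.
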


\begin{mylemma}\label{thm:improve_CGJ_large_n}\mbox{}\\* 
    For all $k\ge2$ there exists $n_0(k)\le k^3-k^2+2k-2$, such that if $n\ge n_0(k)+1$, then we have $\chi_{qk-r}(K(n,k))\ge q+\chi_{qk-r}(K(n-1,k))$ for all $q\ge1$ and $0\le r\le k-1$.
\end{mylemma}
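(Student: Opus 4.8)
The plan is to start from an optimal $(N,qk-r)$-colouring $\varphi$ of $K(n,k)$, where $N=\chi_{qk-r}(K(n,k))$ (so that we may assume every colour is actually used), and to show that when $n$ is large there is an element $a^*\in[n]$ such that at least $q$ of the colours are used only on $k$-sets containing $a^*$. Since the $k$-subsets of $[n]\setminus\{a^*\}$ induce a copy of $K(n-1,k)$ inside $K(n,k)$, and the restriction of $\varphi$ to this copy uses none of those $\ge q$ colours, we would obtain an $(N-q,qk-r)$-colouring of $K(n-1,k)$, hence $\chi_{qk-r}(K(n-1,k))\le N-q=\chi_{qk-r}(K(n,k))-q$, as wanted.

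To locate such an $a^*$, I would classify the colour classes $V_c=\{F\in\binom{[n]}{k}:c\in\varphi(F)\}$; each $V_c$ is an independent set of $K(n,k)$, i.e.\ an intersecting family. Call a colour \emph{non-trivial} if $V_c$ is not contained in any star (equivalently $\bigcap_{F\in V_c}F=\varnothing$), and let $m_0$ be the number of non-trivial colours. Double-counting the colour incidences $(qk-r)\binom{n}{k}=\sum_c|V_c|$, and using $|V_c|\le\binom{n-1}{k-1}$ for every colour (Erd\H{o}s--Ko--Rado) together with the stronger $|V_c|\le\binom{n-1}{k-1}-\binom{n-k-1}{k-1}+1$ for the non-trivial colours (the Hilton--Milner refinement of Erd\H{o}s--Ko--Rado), one gets
\[
 (qk-r)\binom{n}{k}\ \le\ N\binom{n-1}{k-1}-m_0\Bigl(\binom{n-k-1}{k-1}-1\Bigr).
\]
Inserting Stahl's upper bound $N\le qn-2r$ from~\eqref{eq:up_bd} and using $\binom{n}{k}=\frac{n}{k}\binom{n-1}{k-1}$, this simplifies to
\[
 m_0\ \le\ \frac{r(n-2k)}{k}\cdot\frac{\binom{n-1}{k-1}}{\binom{n-k-1}{k-1}-1}.
\]

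The set of vertices lying in some non-trivial class therefore has size at most $m_0\bigl(\binom{n-1}{k-1}-\binom{n-k-1}{k-1}+1\bigr)$. Using the identity $\binom{n-1}{k-1}-\binom{n-k-1}{k-1}=\sum_{i=0}^{k-1}\binom{n-2-i}{k-2}\le k\binom{n-2}{k-2}$ together with the bound on $m_0$, a direct estimate shows that this quantity is strictly less than $\binom{n}{k}$ once $n$ is of order $k^3$; tracking the lower-order terms yields the stated $n_0(k)\le k^3-k^2+2k-2$. Hence, for $n\ge n_0(k)+1$, there is a vertex $F^*\in\binom{[n]}{k}$ contained in no non-trivial colour class. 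Then every colour $c\in\varphi(F^*)$ is trivial, and since $F^*\in V_c$ we have $\varnothing\neq\bigcap_{F\in V_c}F\subseteq F^*$; choosing one element of this intersection for each such $c$ assigns to each of the $qk-r$ colours of $F^*$ an element of $F^*$ whose star contains that colour's class. As $|F^*|=k$, the pigeonhole principle produces an element $a^*\in F^*$ assigned to at least $\lceil(qk-r)/k\rceil=q$ of these colours (here $0\le r\le k-1$ is used), and those $q$ colours appear only on $k$-sets through $a^*$, completing the argument as in the first paragraph.

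The one step needing genuine care is the threshold estimate: we must ensure that the non-trivial classes cannot cover all of $\binom{[n]}{k}$, and for that the bound on $m_0$ must be sharp enough — which relies on \emph{both} the Hilton--Milner bound (the cruder bound $|V_c|\le\binom{n-1}{k-1}$ alone gives no control on $m_0$) and Stahl's upper bound~\eqref{eq:up_bd}. For $r<k-1$ the argument is less demanding, since then it suffices to find a vertex meeting at most $k-r-1$ non-trivial classes; the case $r=k-1$ is the binding one and pins down $n_0(k)$.
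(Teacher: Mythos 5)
Your proposal is correct, and it shares its engine with the paper's proof: both bound the contribution of non-trivial colour classes by double-counting colour incidences against the Erd\H{o}s--Ko--Rado bound $\alpha(K(n,k))=\binom{n-1}{k-1}$, the Hilton--Milner bound $\alpha^*(K(n,k))$, and Stahl's upper bound~\eqref{eq:up_bd}; indeed your inequality for $m_0$ is exactly the paper's~\eqref{eq:improve_stahl_1}, which the paper actually uses in the proof of the companion Lemma~\ref{thm:improve_stahl_large_q}. Where you genuinely diverge is the endgame. The paper shows there are at least $(q-1)n+1$ trivial classes (by contradiction, using the ratio $\alpha^*/\alpha$ controlled in Lemma~\ref{lm:compare_indep}) and pigeonholes over the $n$ possible centres to find $i^*$ with $q$ classes centred at it; you instead use the bound on $m_0$ to exhibit a single vertex $F^*$ lying in no non-trivial class, and pigeonhole its $qk-r$ (all trivial) colours over the $k$ elements of $F^*$, using $\lceil(qk-r)/k\rceil=q$. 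That step, and the subsequent deletion of the $q$ colours centred at $a^*$ to obtain an $(N-q,qk-r)$-colouring of the copy of $K(n-1,k)$ on $[n]\setminus\{a^*\}$, is sound. The one item you assert rather than prove is the threshold estimate $m_0\alpha^*<\binom{n}{k}$, equivalently $r(n-2k)\bigl(\binom{n-1}{k-1}-\binom{n-k-1}{k-1}+1\bigr)<n\bigl(\binom{n-k-1}{k-1}-1\bigr)$, for all $n\ge k^3-k^2+2k-1$; this does check out (the leading-order requirement is $n\gtrsim k(k-1)^2$, comfortably below the stated bound, and small cases verify numerically), but a complete write-up needs an explicit estimate here playing the role of the paper's Lemma~\ref{lm:compare_indep}. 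A minor advantage of your route is the sharper handling of $r<k-1$, where finding a vertex meeting at most $k-r-1$ non-trivial classes suffices and would lower the threshold in those cases.
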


We first show how these lemmas provide Theorem~\ref{thm:main2}.

\begin{repthm}[\ref{thm:main2}]\mbox{}\\*
    Let $k\ge2$ be fixed.
    Then there exist $n_0(k)$ and $q_0(n,k)$ such that the following holds.
    If for all $2k\le n\le n_0(k)$ we know that $\chi_{qk-(k-1)}(K(n,k))=qn-2(k-1)$ for at least one $q\ge q_0(n,k)$, then we have $\chi_{qk-r}(K(n,k))=qn-2r$ for all $n\ge2k$, $q\ge1$ and $0\le r\le k-1$.
\end{repthm}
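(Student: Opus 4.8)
The plan is to derive Theorem~\ref{thm:main2} from Lemmas~\ref{thm:improve_stahl_large_q} and~\ref{thm:improve_CGJ_large_n} together with the facts already recorded in Section~\ref{multi_sec:ideas}, namely the upper bound~\eqref{eq:up_bd} and the observation that for fixed~$q$ it suffices to prove the conjecture for $r=k-1$ (because $\chi_{k'-1}(K(n,k))\le\chi_{k'}(K(n,k))-2$ together with $\chi_{qk}(K(n,k))=qn$ propagates equality downwards through all $0\le r\le k-1$). So the whole argument reduces to showing that, under the stated hypothesis, $\chi_{qk-(k-1)}(K(n,k))=qn-2(k-1)$ for \emph{every} $n\ge2k$ and \emph{every} $q\ge1$; by~\eqref{eq:up_bd} only the lower bound $\chi_{qk-(k-1)}(K(n,k))\ge qn-2(k-1)$ needs proof.

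First I would handle the $n$-direction. Fix $r=k-1$ throughout. Lemma~\ref{thm:improve_CGJ_large_n} says that once $n\ge n_0(k)+1$ we have $\chi_{qk-r}(K(n,k))\ge q+\chi_{qk-r}(K(n-1,k))$ for all $q\ge1$. Iterating this, if we know the conjectured equality $\chi_{qk-r}(K(n',k))=qn'-2r$ for some fixed $q$ and some $2k\le n'\le n_0(k)$, then for $n=n'+j$ with $n>n_0(k)$ we get $\chi_{qk-r}(K(n,k))\ge jq+\chi_{qk-r}(K(n_0(k),k))$ — but this only works cleanly if every intermediate value $n_0(k),n_0(k)+1,\dots$ is already known, so the cleaner statement is: for a fixed~$q$, if the equality holds for $n=n_0(k)$ then Lemma~\ref{thm:improve_CGJ_large_n} gives it for all $n\ge n_0(k)$, since each step adds exactly $q$ to the lower bound and exactly $q$ to the conjectured value $qn-2r$, matching the upper bound~\eqref{eq:up_bd}. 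Thus for the $n$-direction it suffices to know the equality in the finite window $2k\le n\le n_0(k)$ — but only for a \emph{single common} value of $q$, which is not quite what the hypothesis gives (it allows a different $q$ for each $n$). This is where the $q$-direction comes in.

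So next I would use Lemma~\ref{thm:improve_stahl_large_q} to propagate in~$q$ and, crucially, to reconcile the ``different $q$ for each $n$'' issue. For fixed $n$ with $2k\le n\le n_0(k)$, the hypothesis gives some $q^*=q^*(n)\ge q_0(n,k)$ with $\chi_{q^*k-r}(K(n,k))=q^*n-2r$. Lemma~\ref{thm:improve_stahl_large_q} says that for $q\ge q_0(n,k)+1$ we have $\chi_{qk-r}(K(n,k))\ge n+\chi_{(q-1)k-r}(K(n,k))$; combined with the upper bound~\eqref{eq:up_bd} this forces $\chi_{qk-r}(K(n,k))=qn-2r$ to hold for \emph{all} $q\ge q^*(n)$, by induction upwards in $q$ starting from $q^*(n)$ (each step adds $n$ to both the lower bound and the conjectured value $qn-2r$). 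For $q<q^*(n)$, I would reuse the relation $\chi_{(q-1)k-r}(K(n,k))\le\chi_{qk-r}(K(n,k))-n$ in the other direction (this is just Lemma~\ref{sta:prop} applied $k$ times, or rather Lemma~\ref{prop2.1}/\ref{sta:prop} giving $\chi_{(q-1)k-r}\le\chi_{qk-r}-n$ whenever the conclusion of Lemma~\ref{thm:improve_stahl_large_q} is an equality in the upper-bound sense) — more carefully, I would observe that the inequality $\chi_{qk-r}(K(n,k))\ge n+\chi_{(q-1)k-r}(K(n,k))$ from Lemma~\ref{thm:improve_stahl_large_q}, once it holds at $q$, combined with~\eqref{eq:up_bd}, pins down \emph{all} smaller $q\ge1$ too: if $\chi_{q'k-r}(K(n,k))<q'n-2r$ for some $q'\ge1$ then applying the inequality enough times (from some large $q\ge q_0(n,k)+1$ down to $q'$) would contradict~\eqref{eq:up_bd}. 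Hence a single $q^*(n)\ge q_0(n,k)$ witnessing equality is enough to get equality for that $n$ and \emph{all} $q\ge1$. Combining: the hypothesis gives equality for all $2k\le n\le n_0(k)$ and all $q\ge1$; then the $n$-direction via Lemma~\ref{thm:improve_CGJ_large_n} extends it to all $n\ge2k$ and all $q\ge1$ (at fixed $r=k-1$); and finally the downward-in-$r$ propagation from Section~\ref{multi_sec:ideas} gives the full statement for all $0\le r\le k-1$.

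The main obstacle I anticipate is the bookkeeping around the fact that the hypothesis permits the witnessing $q$ to depend on $n$, so I cannot directly run Lemma~\ref{thm:improve_CGJ_large_n} with one uniform $q$ across the window $2k\le n\le n_0(k)$. The resolution — using Lemma~\ref{thm:improve_stahl_large_q} (and its contrapositive-style consequence together with~\eqref{eq:up_bd}) to first upgrade ``equality at one $q\ge q_0(n,k)$'' to ``equality at all $q\ge1$'' for each individual $n$ in the finite window — is the crux, and it must be argued carefully that the inequality in Lemma~\ref{thm:improve_stahl_large_q} holding only for $q\ge q_0(n,k)+1$ is nevertheless enough to constrain the finitely many values $1\le q\le q_0(n,k)$ via a telescoping comparison with the upper bound. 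Once that is in place, the two iterations (upward in $q$, then upward in $n$) are routine, and the reduction to $r=k-1$ is already done in Section~\ref{multi_sec:ideas}.
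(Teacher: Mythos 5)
Your overall architecture is the same as the paper's: reduce to $r=k-1$ via the $\chi_{k'+1}\ge\chi_{k'}+2$ propagation, use Lemma~\ref{thm:improve_stahl_large_q} to push equality upward in $q$ from the witness $q^*(n)$, handle the finitely many $q<q^*(n)$ separately, and then use Lemma~\ref{thm:improve_CGJ_large_n} to push upward in $n$ from $n_0(k)$. The upward-in-$q$ and upward-in-$n$ inductions, and your diagnosis that the $n$-dependent witness $q^*(n)$ must first be upgraded to ``all $q\ge1$'' for each $n$ in the window, are all correct and match the paper.

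The gap is in the step you yourself call the crux: the descent to $1\le q<q^*(n)$. Your ``more careful'' argument tries to use the inequality $\chi_{qk-r}(K(n,k))\ge n+\chi_{(q-1)k-r}(K(n,k))$ from Lemma~\ref{thm:improve_stahl_large_q} to derive a contradiction from $\chi_{q'k-r}(K(n,k))<q'n-2r$. This cannot work: telescoping that inequality from a large $q$ down to $q'$ yields $\chi_{qk-r}\ge(q-q')n+\chi_{q'k-r}$, and if $\chi_{q'k-r}<q'n-2r$ the right-hand side is \emph{below} $qn-2r$, so there is no conflict with~\eqref{eq:up_bd}; a $\ge$-type inequality propagates lower bounds upward in $q$ and can never turn a deficit at small $q'$ into a contradiction at large $q$. (It is also unavailable below $q_0(n,k)+1$, so the telescoping could not even reach small $q'$.) Your first formulation, $\chi_{(q-1)k-r}\le\chi_{qk-r}-n$, has the inequality backwards and does not follow from Lemma~\ref{sta:prop} (which only gives $\chi_{(q-1)k-r}\le\chi_{qk-r}-2k$). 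What is actually needed — and what the paper records as a separate ingredient — is the elementary subadditivity from Lemma~\ref{prop2.1}: combining a $\bigl(\chi_{(q-1)k-r}(K(n,k)),(q-1)k-r\bigr)$-colouring with an $(n,k)$-colouring gives $\chi_{qk-r}(K(n,k))\le n+\chi_{(q-1)k-r}(K(n,k))$ for every $q\ge2$, with no lower bound on $q$. From $\chi_{q^*k-r}=q^*n-2r$ this gives $\chi_{(q^*-1)k-r}\ge\chi_{q^*k-r}-n=(q^*-1)n-2r$, hence equality by~\eqref{eq:up_bd}, and descending induction covers all $1\le q\le q^*(n)$. With that substitution your proof closes; without it, the values $q<q^*(n)$ (in particular those below $q_0(n,k)$, where Lemma~\ref{thm:improve_stahl_large_q} says nothing) are not controlled.
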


\begin{proof}
Fix $k\ge2$.
Since Stahl's conjecture holds for $n=2k$, we can assume $n\ge2k+1$.
Let $n_0(k)$ be the integer as in Lemma~\ref{thm:improve_CGJ_large_n}. 
So if $n\ge n_0(k)+1$, then for all $0\le r\le k-1$ we have 
\begin{equation}\label{eq:proof_main2_0}
    \chi_{qk-r}(K(n,k))\ge q+\chi_{qk-r}(K(n-1,k)).
\end{equation}
For each $n\ge2k+1$, let $q_0(n,k)$ be the integer as in Lemma~\ref{thm:improve_stahl_large_q}.
So if $q\geq q_0(n,k)+1$, then for all $0\le r\le k-1$ we have 
\begin{equation}\label{eq:proof_main2_1}
    \chi_{qk-r}(K(n,k))\ge n+\chi_{(q-1)k-r}(K(n,k)).
\end{equation}
Next note that for all $n\ge2k$ and $q'\ge 2$ we have 
\begin{equation}\label{eq:proof_main2_2}
    \chi_{q'k-r}(K(n,k))\le n+\chi_{(q'-1)k-r}(K(n,k)),
\end{equation}
since combining a $\bigl(\chi_{(q'-1)k-r}(K(n,k)),(q'-1)k-r\bigr)$-colouring and a $(n,k)$-colouring of $K(n,k)$ produces a $(q'k-r)$-multi-colouring.

Assume the conditions in Theorem~\ref{thm:main2} hold.
I.e.\ for each $2k\le n\le n_0(k)$ there is a $q_n\ge q_0(n,k)$ such that $\chi_{q_nk-(k-1)}(K(n,k))= q_nn-2(k-1)$.
Then we immediately have $\chi_{q_nk-r}(K(n,k))= q_nn-2r$ for all $0\le r\le k-1$, since for any non-empty graph $G$ we have $\chi_{k'+1}(G)\ge \chi_{k'}(G)+2$ (by Lemma~\ref{sta:prop}), and $\chi_{q_nk}(K(n,k))= q_nn$. 
Combining this with \eqref{eq:up_bd}, \eqref{eq:proof_main2_1}, and \eqref{eq:proof_main2_2}, gives $\chi_{qk-r}(K(n,k))= qn-2r$ for all $2k\le n\le n_0(k)$, $q\ge1$ and $0\le r\le k-1$.

In particular we have that $\chi_{qk-r}(K(n,k))= qn-2r$ for $n=n_0(k)$ and all $q\ge1$ and $0\le r\le k-1$.
Combining this with \eqref{eq:up_bd} and \eqref{eq:proof_main2_0} gives $\chi_{qk-r}(K(n,k))= qn-2r$ for all $n\ge n_0(k)$, $q\ge1$ and $0\le r\le k-1$, completing the proof.
\end{proof}

Essential in the proofs of Lemmas~\ref{thm:improve_stahl_large_q} and~\ref{thm:improve_CGJ_large_n} is some more detailed information about large independent sets in Kneser graphs.
Recall that for all $n\ge2k$ the independence number of the Kneser graph $K(n,k)$ is $\alpha(K(n,k))=\dbinom{n-1}{k-1}$~\cite{erdos_intersection_1961}, and for $n\ge2k+1$ equality only occurs for \emph{trivial} independent sets: vertex sets whose vertices are $k$-sets in $[n]$ that contain some fixed common element $i\in[n]$.
We say that such an independent set is \emph{centred at $i$}.

Hilton and Milner~\cite{hilton1967some} showed that if $n\ge 2k+1$ and an independent set in the Kneser graph $K(n,k)$ is not trivial, then it has order at most $\dbinom{n-1}{k-1}-\dbinom{n-k-1}{k-1}+1$.
This `second best' bound is significantly smaller than the Erd{\H{o}}s-Ko-Rado bound, which means that for large~$n$ and~$q$, many of the colours used in a `good' $(qk-r)$-multi-colouring of $K(n,k)$ must induce trivial independent sets.
In the remainder, we use $\alpha^*(K(n,k))=\dbinom{n-1}{k-1}-\dbinom{n-k-1}{k-1}+1$ for the Hilton-Milner bound.

\begin{proof}[Proof of Lemma~\ref{thm:improve_stahl_large_q}]
Fix some $k\ge 2$, $n\ge 2k+1$ and $0\le r\le k-1$.
Take $q\ge q_0(n,k)+1$, where $q_0(n,k)$ is a function that we will specify later.
Fix an $(x,qk-r)$-colouring $C$ of $K(n,k)$ for some $x\le qn-2r$ (which we know exists by~\eqref{eq:up_bd}).
We will show this means there also exists an $(x-n, (q-1)k-r)$-colouring of $K(n,k)$. 

Let $y$ be the number of \emph{non-trivial} colour classes in $C$ (the colour classes that are not a subset of $\Bigl\{F\in\dbinom{[n]}{k}\Bigm\vert i\in F\Bigr\}$ for some $i\in[n]$).
Hence there are $x-y$ \emph{trivial} colour classes.
By counting the appearance of each vertex in all colour classes, we have
\allowdisplaybreaks
\begin{align*}
    (qk-r)\binom{n}{k}&\le (x-y)\alpha(K(n,k))+y\alpha^*(K(n,k))\\
    &\le (qn-2r-y)\binom{n-1}{k-1}+y\Bigl(\binom{n-1}{k-1}-\binom{n-k-1}{k-1}+1\Bigr).
\end{align*} 
Since $(qk-r)\dbinom{n}{k}=\Bigl(qn-\dfrac{rn}{k}\Bigr)\dbinom{n-1}{k-1}$, this gives
\begin{align}\label{eq:improve_stahl_1}
    y\le \frac{(\frac{rn}{k}-2r)\binom{n-1}{k-1}}{\binom{n-k-1}{k-1}-1}= \frac{r(n-2k)\binom{n-1}{k-1}}{k\Bigl(\binom{n-k-1}{k-1}-1\Bigr)}.
\end{align}

We claim that if $q\ge q_0(n,k)+1$, where 
\begin{equation}\label{def-q0}
    q_0(n,k):= \biggl\lfloor\frac{(k-1)(n-2k+1)}{n-k}+ \frac{(k-1)(n-2k)(n-1)\Bigl(\binom{n-2}{k-1}-\binom{n-k-1}{k-1}\Bigr)}{k(n-k)\Bigl(\binom{n-k-1}{k-1}-1\Bigr)}\biggr\rfloor,
\end{equation}
then for all $i\in [n]$ there is a \emph{trivial} colour class in~$C$ that is \emph{centred} at $i$.
For a contradiction, assume that there is no colour class centred at $i^*$ for some $i^*\in [n]$, i.e.\ none of the colour classes in $C$ is a subset of $\mathcal{F}= \Bigl\{F\in \dbinom{[n]}{k}\Bigm\vert i^*\in F\Bigr\}$.
Then each trivial colour class contains at most $\dbinom{n-2}{k-2}$ vertices in~$\mathcal{F}$, and each non-trivial colour class contains at most $\dbinom{n-1}{k-1} - \dbinom{n-k-1}{k-1}$ vertices in~$\mathcal{F}$. 
Counting the appearance of each vertex in $\mathcal{F}$ in all colour classes, we find
\begin{align*}
    (qk-r)|\mathcal{F}|&\le (x-y)\binom{n-2}{k-2}+y\Bigl(\binom{n-1}{k-1}-\binom{n-k-1}{k-1}\Bigr)\\
    &\le (qn-2r)\binom{n-2}{k-2}+y\Bigl(\binom{n-1}{k-1}-\binom{n-2}{k-2}-\binom{n-k-1}{k-1}\Bigr).
\end{align*}
Since $(qn-r)|\mathcal{F}|=(qn-r)\dbinom{n-1}{k-1}=\dfrac{(qk-r)(n-1)}{k-1}\dbinom{n-2}{k-2}$ and $\dbinom{n-1}{k-1}-\dbinom{n-2}{k-2}=\dbinom{n-2}{k-1}$, this gives
\begin{equation}\label{eq:improve_stahl_2}
    y\Bigl(\binom{n-2}{k-1}-\binom{n-k-1}{k-1}\Bigr)\ge \frac{q(n-k)-r(n-2k+1)}{k-1}\binom{n-2}{k-2}.
\end{equation}
Combining \eqref{eq:improve_stahl_1} and \eqref{eq:improve_stahl_2}, we obtain
\[\frac{r(n-2k)\binom{n-1}{k-1}\Bigl(\binom{n-2}{k-1}-\binom{n-k-1}{k-1}\Bigr)}{k\Bigl(\binom{n-k-1}{k-1}-1\Bigr)}\ge \frac{q(n-k)-r(n-2k+1)}{k-1}\binom{n-2}{k-2} .\]
We can rearrange this to
\[q\le \frac{r(n-2k+1)}{n-k}+ \frac{r(n-2k)(n-1)\Bigl(\binom{n-2}{k-1}-\binom{n-k-1}{k-1}\Bigr)}{k(n-k)\Bigl(\binom{n-k-1}{k-1}-1\Bigr)}. \] 
Since $r\le k-1$, the right-hand side of this inequality is smaller than $q_0(n,k)+1$ as defined in~\eqref{def-q0}, a contradiction.
 
So we can assume that for all $i\in [n]$ there is a trivial colour class in $C$ that is centred at $i$.
By removing one such trivial colour class centred at $i$ for each $i\in [n]$, we remove $n$ colour classes in total and at most $k$ colours for each vertex.
This gives an $(x-n,(q-1)k-r)$-colouring of $K(n,k)$, thus proving $\chi_{(q-1)k-r}(K(n,k))\le \chi_{qk-r}(K(n,k))-n$.
\end{proof} 

We need the following technical result for the proof of Lemma~\ref{thm:improve_CGJ_large_n}.

\begin{mylemma}\label{lm:compare_indep}\mbox{}\\* 
For all $k\ge2$, if $n_0(k):=k^3-k^2+2k-2$ then for $n\ge n_0(k)+1$ we have $\dfrac{\alpha^*(K(n,k))}{\alpha(K(n,k))}< \dfrac{n}{k(n-2k+2)}$.
\end{mylemma}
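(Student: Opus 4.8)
The plan is to write out both sides explicitly in terms of binomial coefficients and reduce the claimed inequality to something that can be checked by elementary estimates valid for $n\ge n_0(k)+1 = k^3-k^2+2k-1$. Recall $\alpha(K(n,k))=\binom{n-1}{k-1}$ and $\alpha^*(K(n,k))=\binom{n-1}{k-1}-\binom{n-k-1}{k-1}+1$, so
\[
\frac{\alpha^*(K(n,k))}{\alpha(K(n,k))}= 1-\frac{\binom{n-k-1}{k-1}-1}{\binom{n-1}{k-1}}.
\]
Thus the desired inequality is equivalent to
\[
\frac{\binom{n-k-1}{k-1}-1}{\binom{n-1}{k-1}}> 1-\frac{n}{k(n-2k+2)}= \frac{(k-1)(n-2k+2)-n+n}{\,k(n-2k+2)\,}\;,
\]
and after simplifying the right-hand side to $\dfrac{(k-1)n-2k^2+3k-2}{k(n-2k+2)}$ it suffices to prove a clean lower bound on $\binom{n-k-1}{k-1}\big/\binom{n-1}{k-1}$. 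First I would observe that
\[
\frac{\binom{n-k-1}{k-1}}{\binom{n-1}{k-1}}=\prod_{j=1}^{k-1}\frac{n-k-j}{n-j}=\prod_{j=1}^{k-1}\Bigl(1-\frac{k}{n-j}\Bigr)\ge\Bigl(1-\frac{k}{n-k+1}\Bigr)^{k-1},
\]
and then use the standard bound $(1-x)^{m}\ge 1-mx$ for $x\in[0,1]$ to get $\binom{n-k-1}{k-1}\big/\binom{n-1}{k-1}\ge 1-\dfrac{k(k-1)}{n-k+1}$. The ``$-1$'' in the numerator is a lower-order correction; since $\binom{n-1}{k-1}$ is enormous compared with everything else for $n$ in our range, I would absorb it into the estimate by noting $\dfrac{1}{\binom{n-1}{k-1}}\le\dfrac{1}{n-1}$ (already true for $k\ge2$), which is dominated by the slack we will have.

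Putting the pieces together, it remains to verify the single inequality
\[
1-\frac{k(k-1)}{n-k+1}-\frac{1}{n-1}\ \ge\ \frac{(k-1)n-2k^2+3k-2}{k(n-2k+2)},
\]
or equivalently (after clearing denominators) a polynomial inequality in $n$ and $k$ that is linear, once expanded, in the ``large'' variable $n$. I would check that, treating $k\ge2$ as fixed, both the leading coefficient in $n$ and the constant term are such that the inequality holds for all $n\ge k^3-k^2+2k-1$; concretely, after multiplying through by $k(n-2k+2)(n-k+1)(n-1)>0$ one gets a degree-$3$ polynomial in $n$ with positive leading coefficient, and I would show its value at $n=k^3-k^2+2k-1$ is nonnegative and that it is increasing thereafter (by checking the derivative is positive on $n\ge n_0(k)+1$, or more crudely by bounding each negative term). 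The choice $n_0(k)=k^3-k^2+2k-2$ is presumably exactly what makes the boundary case work out, so I expect the computation to be tight there and comfortably slack for larger $n$.

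The main obstacle is purely the bookkeeping in that final polynomial inequality: the bound $(1-x)^{k-1}\ge 1-(k-1)x$ loses a factor of roughly $k$ compared with the truth, so I need $n$ to be of order $k^3$ rather than $k^2$ for the crude estimate to suffice, which is consistent with the stated $n_0(k)$. If the straightforward $(1-mx)$ bound turns out not to be quite enough at the boundary, the fallback is to keep one more term of the expansion, using $(1-x)^{k-1}\ge 1-(k-1)x+\binom{k-1}{2}x^2 - \binom{k-1}{3}x^3$ or simply $\prod_{j=1}^{k-1}(1-\tfrac{k}{n-j})\ge 1-k\sum_{j=1}^{k-1}\tfrac{1}{n-j}$ together with $\sum_{j=1}^{k-1}\tfrac1{n-j}\le\tfrac{k-1}{n-k+1}$, which gives the same leading behaviour but with better constants. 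Either way the argument is elementary; no structural facts about Kneser graphs beyond the Erd\H{o}s--Ko--Rado and Hilton--Milner values (already quoted) are needed.
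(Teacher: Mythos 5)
Your proposal is correct and starts from the same decomposition as the paper: both write $\alpha^*/\alpha$ via $\binom{n-k-1}{k-1}\big/\binom{n-1}{k-1}=\prod_{j=1}^{k-1}\bigl(1-\tfrac{k}{n-j}\bigr)$ and lower-bound the product by $\bigl(1-\tfrac{k}{n-k+1}\bigr)^{k-1}$. You diverge only in the endgame. The paper bounds the correction term by $\tfrac{1}{\binom{n-1}{k-1}}<\tfrac{2k-2}{k(n-2k+2)}$ (via a small induction on $k$), which makes everything collapse to $\bigl(1-\tfrac{k}{n-k+1}\bigr)^{k-1}\ge1-\tfrac1k$, settled with $1-\tfrac1x\le\ln x\le x-1$. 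You instead use Bernoulli's inequality together with the cruder bound $\tfrac{1}{\binom{n-1}{k-1}}\le\tfrac1{n-1}$, reducing the lemma to the rational inequality $\tfrac{n}{k(n-2k+2)}>\tfrac{k(k-1)}{n-k+1}+\tfrac1{n-1}$. That inequality does hold on the stated range, and you can avoid the cubic-polynomial bookkeeping entirely by splitting it as $\bigl(\tfrac{2k-2}{k(n-2k+2)}-\tfrac1{n-1}\bigr)+\bigl(\tfrac1k-\tfrac{k(k-1)}{n-k+1}\bigr)>0$: the first bracket is $\ge0$ for all $k\ge2$ (clearing denominators gives $n(k-2)+2(k-1)^2\ge0$), and the second is $>0$ exactly when $n>k^3-k^2+k-1$, which is comfortably implied by $n\ge k^3-k^2+2k-1$. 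So your route is, if anything, slightly more elementary (no logarithms, no induction) at the cost of a messier-looking final check. Two small slips to repair: $k(n-2k+2)-n=(k-1)(n-2k)$, not $(k-1)n-2k^2+3k-2$ (your version exceeds the correct one by $k-2$, so you would be proving a marginally stronger inequality, which happens still to be true); and the cleared inequality is cubic in $n$, not linear, as you yourself note a line later.
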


\begin{proof}
We can write
\[\frac{\alpha^*(K(n,k))}{\alpha(K(n,k))}= \frac{\binom{n-1}{k-1}-\binom{n-k-1}{k-1}+1}{\binom{n-1}{k-1}}= 1-\prod_{i=1}^{k-1}\frac{n-k-i}{n-i} +\frac{1}{\binom{n-1}{k-1}}.\]
We first estimate $-\prod\limits_{i=1}^{k-1}\dfrac{n-k-i}{n-i}= -\prod\limits_{i=1}^{k-1}\Bigl(1-\dfrac{k}{n-i}\Bigr)\le -\Bigl(1-\dfrac{k}{n-k+1}\Bigr)^{k-1}$. 

Next, it is straightforward to check that if $k=2$, then we have $\dfrac{1}{\binom{n-1}{k-1}}=\dfrac{1}{n-1}<\dfrac{1}{n-2}=\dfrac{2k-2}{k(n-2k+2)}$ for all $n\ge3$.
For $k\ge2$ and $n\ge2k+1$, we can estimate by induction on~$k$:
\begin{align*}
    \frac{1}{\binom{n-1}{(k+1)-1}}= \frac{1}{\frac{n-k}{k}\binom{n-1}{k-1}}&<\frac{k}{n-k}\cdot\frac{2k-2}{k(n-2k+2)}\\
    &= \frac{(2k-2)(k+1)(n-2k)}{2k(n-k)(n-2k+2)}\cdot\frac{2(k+1)-2}{(k+1)(n-2(k+1)+2)}\\
    &< \frac{2(k+1)-2}{(k+1)(n-2(k+1)+2)}.
\end{align*}
This shows that $\dfrac{1}{\binom{n-1}{k-1}}<\dfrac{2k-2}{k(n-2k+2)}$ for all $n\ge2k+1\ge5$.

All in all, this means that it suffices to find $n_0(k)\ge2k+1$ such that for all $n\ge n_0(k)+1$ we have
\[1-\Bigl(1-\dfrac{k}{n-k+1}\Bigr)^{k-1}+\dfrac{2k-2}{k(n-2k+2)}\le \dfrac{n}{k(n-2k+2)}.\]
This inequality is equivalent to $\Bigl(1-\dfrac{k}{n-k+1}\Bigr)^{k-1}\ge 1-\dfrac{1}{k}$, hence to
\[(k-1)\ln\Bigl(1-\dfrac{k}{n-k+1}\Bigr)\ge \ln\Bigl(1-\dfrac{1}{k}\Bigr).\]
Now we use the standard inequalities $1-\dfrac{1}{x}\le \ln(x)\le x-1$ to obtain that it is enough to guarantee
$(k-1)\cdot\dfrac{-k}{n-2k+1}\ge \dfrac{-1}{k}$.
This holds for $n\ge k^3-k^2+2k-1$, completing the proof.
\end{proof}

Note that for specific values of~$k$ we can get better bounds on $n_0(k)$.
For instance, computations show that for $k=4$ the conclusion of the lemma already holds for $n\ge39$ (whereas $n_0(4)+1=55$).

Now we are ready to prove Lemma~\ref{thm:improve_CGJ_large_n}.

\begin{proof}[Proof of Lemma~\ref{thm:improve_CGJ_large_n}]
Take  $k\ge2$ and $n\ge n_0(k)+1$, with $n_0(k)$ as in Lemma~\ref{lm:compare_indep} (hence definitely $n\ge2k+1$), and assume there exists an $(x, qk-r)$-colouring of $K(n,k)$ for some $x\le qn-2r$. We will prove there is an $(x-q,qk-r)$-colouring of $K(n-1,k)$, which shows $\chi_{qk-r}(K(n-1,k))\le \chi_{qk-r}(K(n,k))-q$.

We first claim there are at least $(q-1)n+1$ trivial colour classes in the $(x,qk-r)$-colouring of $K(n,k)$. If this is not the case, then there are at most $(q-1)n$ colour classes that appear on more than $\alpha^*(K(n,k))$ vertices. Hence counting the total number of appearance of each vertex in all colour classes, we have
\begin{align*}
    (qk-r)\binom{n}{k}& \le (q-1)n\alpha(K(n,k)) + (x-(q-1)n)\alpha^*(K(n,k))\nonumber\\
    &\le (q-1)n\alpha(K(n,k)) + (qn-2r-(q-1)n)\alpha^*(K(n,k)).
\end{align*}
Since $\dbinom{n}{k}= \dfrac{n}{k}\dbinom{n-1}{k-1}= \dfrac{n}{k}\alpha(K(n,k))$, we can rearrange this to
\[\dfrac{\alpha^*(K(n,k))}{\alpha(K(n,k))}\ge \dfrac{n(k-r)}{k(n-2r)}= \dfrac{n}{2k}\Bigl(1-\dfrac{n-2k}{n-2r}\Bigr)\ge \dfrac{n}{k(n-2k+2)},\]
where we use that $r\le k-1$.

This contradicts Lemma~\ref{lm:compare_indep}.
Hence there are at least $(q-1)n+1$ trivial colour classes in the $(x, qk-r)$-colouring, where each trivial colour class is a subset of $\Bigl\{F\in \dbinom{[n]}{k}\Bigm\vert i\in F\Bigr\}$ for some $i\in[n]$.
Therefore there is some $i^*\in [n]$ such that at least $q$ trivial colour classes are subsets of $\Bigl\{F\in \dbinom{[n]}{k}\Bigm\vert i^*\in F\Bigr\}$.
Removing those $q$ trivial colour classes, we obtain an $(x-q,qk-r)$-colouring of $K(n-1,k)$, as required.
\end{proof}

As we already know that Stahl's Conjecture is true for $n=2k$ and $n=2k+1$, Theorem~\ref{thm:main2} shows that for every $k$, at most $k^3-k^2-3$ values of $\chi_{k'}(n,k)$ need to be determined (one for each $n$, $2k+2\le n\le n_0(k)$) to prove the conjecture for that value of $k$ and all $n,q$ (or find a counterexample).

We next prove the upper bound of $q_0(n,k)$ mentioned in Section~\ref{multi_sec:intro}.

\begin{myprop}\label{q_0-bound}\mbox{}\\*
For all $n\ge 2k+1$ we have $q_0(n,k)< 4^k(n-2k)$.
\end{myprop}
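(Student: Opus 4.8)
The plan is to separate the quantity inside the floor in~\eqref{def-q0} into its two summands and bound each one. Write $q_0(n,k)\le T_1+T_2$ with $T_1=\dfrac{(k-1)(n-2k+1)}{n-k}$ and $T_2$ the binomial summand; both are non-negative, so it is enough to prove $T_1+T_2<4^k(n-2k)$. The term $T_1$ is immediate: since $k\ge2$ we have $n-2k+1<n-k$, hence $T_1<k-1$. All the real work is in estimating $T_2$.

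For $T_2$ I would first replace $\binom{n-2}{k-1}-\binom{n-k-1}{k-1}$ by the larger quantity $\binom{n-2}{k-1}$, and note that $n\ge2k+1$ and $k\ge2$ give $\binom{n-k-1}{k-1}\ge\binom{k}{k-1}=k\ge2$, so $\binom{n-k-1}{k-1}-1\ge\tfrac12\binom{n-k-1}{k-1}$; thus the binomial fraction in $T_2$ is less than $2\,g(n)$, where $g(n):=\binom{n-2}{k-1}\big/\binom{n-k-1}{k-1}$. The key point is that $g$ is non-increasing on $n\ge2k+1$: a short computation gives $g(n+1)/g(n)=\dfrac{(n-1)(n-2k+1)}{(n-k)^2}$, and the numerator minus the denominator equals $-(k-1)^2\le0$. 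Hence $g(n)\le g(2k+1)=\binom{2k-1}{k-1}/k=\binom{2k}{k}/(2k)<4^k/(2k)$, where I use $\binom{2k-1}{k-1}=\tfrac12\binom{2k}{k}$ and the standard bound $\binom{2k}{k}<4^k$. Finally, bounding $\dfrac{n-1}{n-k}=1+\dfrac{k-1}{n-k}<2$ and $\dfrac{k-1}{k}<1$ and multiplying, I get $T_2<(n-2k)\cdot 2\cdot\dfrac{k-1}{k}\cdot\dfrac{4^k}{k}=\dfrac{2(k-1)}{k^2}\,4^k(n-2k)\le\tfrac12\,4^k(n-2k)$, the last inequality because $(k-2)^2\ge0$.

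Combining the two bounds, $q_0(n,k)\le T_1+T_2<(k-1)+\tfrac12\,4^k(n-2k)$, and since $n-2k\ge1$ and $\tfrac12\,4^k>k-1$ for all $k\ge2$, the right-hand side is strictly less than $4^k(n-2k)$, which is what we want.

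The step I expect to be the crux is establishing the monotonicity of $g(n)$: this is what reduces the estimate to the boundary case $n=2k+1$, where the ratio collapses to a central binomial coefficient and the clean bound $\binom{2k}{k}<4^k$ becomes available. A crude factor-by-factor estimate of $\binom{n-2}{k-1}/\binom{n-k-1}{k-1}$ instead only yields $k^{k-1}$, which exceeds $4^k$ once $k\ge6$ and would therefore not be good enough.
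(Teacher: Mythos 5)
Your proof is correct, and its key step is genuinely different from the paper's. Both arguments ultimately reduce the problem to the boundary case $n=2k+1$, where the relevant binomial ratio collapses to a central binomial coefficient bounded by $4^k$; but they get there by different means. The paper first performs an exact algebraic rewriting of the whole expression in~\eqref{def-q0} (so that a term $-(n-2k)$ cancels cleanly) to obtain $q_0(n,k)<(n-2k)\prod_{i=1}^{k-1}\frac{n-i}{n-k-i}$, and then proves the product is maximised at $n=2k+1$ by taking logarithms, comparing the sum to an integral, and checking via derivatives that the resulting function of $n$ is decreasing. You instead bound the two summands separately with cruder term-by-term estimates (losing harmless factors of $2$ along the way, which you recover from the slack $\frac{2(k-1)}{k^2}\le\frac12$), and prove the monotonicity of $g(n)=\binom{n-2}{k-1}\big/\binom{n-k-1}{k-1}$ by the one-line discrete identity $g(n+1)/g(n)=\frac{(n-1)(n-2k+1)}{(n-k)^2}$, whose numerator minus denominator is $-(k-1)^2\le0$. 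This replaces the paper's calculus entirely with elementary algebra, which is arguably cleaner; the price is that your intermediate constants are looser (e.g.\ you bound $\frac{n-1}{n-k}$ by $2$ rather than absorbing it exactly into the product), so your method would not directly reproduce the paper's sharper refinements such as $q_0(n,k)<\mathrm{e}(n-2k)$ for $n\ge k^2+k-1$, which rely on the tighter product form. Your closing observation is also on point: the naive factor-by-factor bound $\prod_{i=1}^{k-1}\frac{n-1-i}{n-k-i}\le k^{k-1}$ at $n=2k+1$ is indeed too weak, and the monotonicity reduction to $\binom{2k}{k}<4^k$ is exactly what makes the exponential bound $4^k$ attainable.
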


\begin{proof}
We first estimate, using the definition of $q_0(n,k)$ in~\eqref{def-q0},
\begin{align*}
    q_0(n,k)&\le \frac{(k-1)(n-2k+1)}{n-k}+ \frac{(k-1)(n-2k)(n-1)\Bigl(\binom{n-2}{k-1}-\binom{n-k-1}{k-1}\Bigr)}{k(n-k)\Bigl(\binom{n-k-1}{k-1}-1\Bigr)}\\
    &= \frac{(k-1)(n-2k)(n-1)}{k(n-k)}\biggr(\frac{\binom{n-2}{k-1}-1}{\binom{n-k-1}{k-1}-1}-1\biggr) + \frac{(k-1)(n-2k+1)}{n-k}\\
    &= \frac{(k-1)(n-2k)(n-1)}{k(n-k)}\cdot\frac{\binom{n-2}{k-1}-1}{\binom{n-k-1}{k-1}-1} - \frac{(k-1)(n-2k-1)}{k}\\
    &= \frac{(k-1)(n-2k)(n-1)}{k(n-k)}\biggl(\frac{\binom{n-2}{k-1}}{\binom{n-k-1}{k-1}} + \frac{1}{\binom{n-k-1}{k-1}-1}\Bigl(\frac{\binom{n-2}{k-1}}{\binom{n-k-1}{k-1}}-1\Bigr)\biggr) \\
    &\phantom{{}={}}- \frac{(k-1)(n-2k-1)}{k}\\
    &\le \frac{(k-1)(n-2k)(n-1)}{k(n-k)}\biggl(\frac{\binom{n-2}{k-1}}{\binom{n-k-1}{k-1}} + \frac{\binom{n-2}{k-1}}{(k-1)\binom{n-k-1}{k-1}} - \frac{1}{k-1}\biggr) \\
    &\phantom{{}={}}- \frac{(k-1)(n-2k-1)}{k}\\
    &= \frac{(n-2k)(n-1)}{(n-k)}\cdot \frac{\binom{n-2}{k-1}}{\binom{n-k-1}{k-1}} - (n-2k) + \frac{k-1}{n-k}\\
    &= (n-2k)\prod_{i=1}^{k-1} \frac{n-i}{n-k-i}  - (n-2k) + \frac{k-1}{n-k}\\
    &< (n-2k)\prod_{i=1}^{k-1} \frac{n-i}{n-k-i}.
\end{align*}
In the second inequality we used that $\dbinom{n-2}{k-1}\ge \dbinom{n-k-1}{k-1}\ge \dbinom{2k+1-k-1}{k-1}= k$.

We bound $\prod\limits_{i=1}^{k-1}\dfrac{n-i}{n-k-i}$ by estimating its logarithm for $n\ge 2k+1$:
\begin{align*}
    \ln{\Bigl(\prod_{i=1}^{k-1} \frac{n-i}{n-k-i}\Bigr)} 
    = \sum_{i=n-2k+1}^{n-k-1}\!\! \ln{\Bigl(1+\frac{k}{i}\Bigr)}
    &< \int_{n-2k}^{n-k-1} \ln{\Bigl(1+\frac{k}{x}\Bigr)}\,\mathrm{d}x \\
    &= (n-1)\ln{(n-1)} + (n-2k)\ln{(n-2k)} \\
    &\phantom{{}={}} - (n-k-1)\ln{(n-k-1)} - (n-k)\ln{(n-k)}.
\end{align*}
We will estimate this by setting, for $x\ge2k+1$:
\[f(x):= (x-1)\ln(x-1)+(x-2k)\ln(x-2k)-(x-k-1)\ln(x-k-1)-(x-k)\ln(x-k).\]
Then we have $f'(x)= \bigl(\ln(x-1)-\ln(x-k-1)\bigr)-\bigl(\ln(x-k)-\ln(x-2k)\bigr)$.
Now set $g(y):=\ln y-\ln(y-k)$, for $y\ge k+1$.
It's straightforward to check that $g'(y)<0$ for $y\ge k+1$. Hence $g(y)$ is strictly decreasing and so for $x\ge2k+1$ we have $f'(x)= g(x-1)-g(x-k)<0$.
So also $f(x)$ is strictly decreasing; which means that $f(x)\le f(2k+1)$ for all $x\ge2k+1$.
Using this we obtain
\begin{align*}
    \ln\Bigl(\prod_{i=1}^{k-1} \frac{n-i}{n-k-i}\Bigr)
    &< 2k\ln(2k) + \ln1 - k\ln k - (k+1)\ln(k+1)\\[-1mm]
    &= 2k\ln2- \bigl((k+1)\ln(k+1)- k\ln k\bigr)< 2k\ln2.
\end{align*}
We can conclude that $\prod\limits_{i=1}^{k-1}\dfrac{n-i}{n-k-i}< \mathrm{e}^{2k\ln{2}}= 4^k$, which gives $q_0(n,k)< 4^k(n-2k)$.
\end{proof}

Note that for specific values of $n,k$ we can get better bounds on $q_0(n,k)$.
For instance, if $k=4$ and $n\ge11$, direct computation shows $q_0(n,4) = 12$.
Hence for each $n\ge11$ it suffices to find $\chi_{12\cdot4-(4-1)}(K(n,4))=\chi_{45}(K(n,4))$ to decide Stahl's Conjecture for $k=4$ and those~$n$.

For fixed $k$, larger $n$ can give better bounds for the expression $\prod\limits_{i=1}^{k-1}\dfrac{n-i}{n-k-i}=\!\prod\limits_{i=n-2k+1}^{n-k-1}\!\dfrac{k+i}{i}$ that appears in the proof above, which in its turn gives smaller $q_0(n,k)$.
For instance,

(a)\mbox{ \ }\textit{if $n\ge ck-1$ for some constant $c>2$, then $\prod\limits_{i=n-2k+1}^{n-k-1}\!\dfrac{k+i}{i}\le \Bigl(\dfrac{c-1}{c-2}\Bigr)^{k-1}\!$, and hence\linebreak
\phantom{(a)\mbox{ \ }}$q_0(n,k)< (n-2k)\Bigl(\dfrac{c-1}{c-2}\Bigr)^{k-1}$;}

(b)\mbox{ \ }\textit{if $n\ge k^2+k-1$, then $\prod\limits_{i=n-2k+1}^{n-k-1}\!\dfrac{k+i}{i}\le \Bigl(\dfrac{k}{k-1}\Bigr)^{k-1}\!< \mathrm{e}$, and hence $q_0(n,k)<\mathrm{e}(n-2k)$};

(c)\mbox{ \ }\textit{if $n\ge k^3+k-1$, then $\prod\limits_{i=1}^{k-1}\dfrac{n-i}{n-k-i}\le \Bigl(\dfrac{k^2}{k^2-1}\Bigr)^{k-1}\!< 1+\dfrac{1}{k}$, and hence $q_0(n,k)<{}$\linebreak
\phantom{(c)\mbox{ \ }}$\dfrac{k+1}{k}(n-2k) - (n-2k)+\dfrac{k-1}{n-k} < \dfrac{n}{k}$.}

\medskip
Note that if Stahl's Conjecture is false for some $n,q,k,r$, then Lemmas~\ref{thm:improve_stahl_large_q} and~\ref{thm:improve_CGJ_large_n} immediately give an infinite number of counterexamples.

\begin{mycor}\label{cor:comb_improve_large_n_q}\mbox{}\\*
    If for some $n,q,k,r$, $n\ge2k+1\ge5$, $q\ge1$, $0\le r\le k-1$, we have $\chi_{qk-r}(K(n,k))\le qn-2r-1$, then for all $r',q'$ with $r\le r'\le k-1$, $q'\ge q+r'-r$ we have $\chi_{q'k-r'}(K(n,k))\le q'n-2r'-1$.
\end{mycor}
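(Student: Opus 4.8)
The plan is to propagate the single counterexample $\chi_{qk-r}(K(n,k))\le qn-2r-1$ in two independent directions, each handled by one of the two basic tools already in the paper: first raise~$q$ with~$r$ held fixed, using Lemma~\ref{prop2.1}; then raise~$r$ with the new value of~$q$ held fixed, using Lemma~\ref{sta:prop}. The point is that each of these moves preserves the ``$-1$'' defect, i.e.\ the failure of the conjectured lower bound.

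\textbf{Step 1 (raise $q$).} I would show by induction on $q'\ge q$ that $\chi_{q'k-r}(K(n,k))\le q'n-2r-1$. The base case $q'=q$ is the hypothesis. For the inductive step, note that $K(n,k)$ is trivially $(n,k)$-colourable and, by the induction hypothesis, also $(q'n-2r-1,\,q'k-r)$-colourable; combining these two multi-colourings with Lemma~\ref{prop2.1} shows $K(n,k)$ is $\bigl((q'+1)n-2r-1,\,(q'+1)k-r\bigr)$-colourable, i.e.\ $\chi_{(q'+1)k-r}(K(n,k))\le(q'+1)n-2r-1$.

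\textbf{Step 2 (raise $r$).} Now fix any $q'\ge q$. Since $n\ge2k+1$ the Kneser graph $K(n,k)$ has an edge, so the inequality $\chi_{\ell-1}(K(n,k))\le\chi_{\ell}(K(n,k))-2$ (recorded in Section~\ref{multi_sec:ideas} as a consequence of Lemma~\ref{sta:prop}) holds for all $\ell\ge2$. For $r\le r'\le k-1$, applying it $r'-r$ times starting from $\ell=q'k-r$ is legitimate, since the values of~$\ell$ involved all lie in $\{q'k-r'+1,\dots,q'k-r\}$ and the smallest of these is $q'k-r'+1\ge q'k-k+2\ge2$ (using $q'\ge1$ and $r'\le k-1$). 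This gives
\[\chi_{q'k-r'}(K(n,k))\ \le\ \chi_{q'k-r}(K(n,k))-2(r'-r)\ \le\ q'n-2r-1-2(r'-r)\ =\ q'n-2r'-1.\]

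Putting the two steps together yields $\chi_{q'k-r'}(K(n,k))\le q'n-2r'-1$ for every $q'\ge q$ and every $r'$ with $r\le r'\le k-1$; since $r'-r\ge0$ this in particular covers all $q'\ge q+r'-r$, which is the stated corollary (and the argument in fact gives the slightly stronger conclusion with ``$q'\ge q$'' in place of ``$q'\ge q+r'-r$''). I do not anticipate a genuine obstacle, since both ingredients are already established; the only point requiring attention is that in Step~2 the second parameter stays at least~$2$, so that Lemma~\ref{sta:prop} keeps applying, which is immediate from $q'\ge1$, $r'\le k-1$ and $n\ge2k+1$.
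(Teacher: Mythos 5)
Your proof is correct: the two propagation steps (adding an $(n,k)$-colouring via Lemma~\ref{prop2.1} to raise $q'$, and applying the homomorphism of Lemma~\ref{sta:prop} to raise $r'$ at cost $2$ per step, with the check that the multiplicity stays at least $2$) are exactly the upper-bound machinery from Section~\ref{multi_sec:ideas} that the corollary rests on, so this is essentially the intended argument. In fact your version is slightly stronger than the stated corollary, since decreasing $k'$ by one via Lemma~\ref{sta:prop} costs nothing in $q'$, so you obtain the conclusion for all $q'\ge q$ rather than only $q'\ge q+r'-r$.
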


\section{Concluding Remarks}\label{multi_sec:conclude}

Our starting point for the research describe in this note was Question~\ref{main_ques}, which we felt was a quite natural question, and then discovered was equivalent to a conjecture made by Stahl in 1976.
We consider as our main contribution the simple observation that allowed us to give short proofs for many of the known cases of Stahl's Conjecture; and in fact sometimes extend those cases.

On the other hand, we have no illusion that similar observations will be enough to prove the conjecture.
For the most part it remains stubbornly open.

\medskip
Stahl's Conjecture is a wide-ranging generalisation of the original Kneser Conjecture.
Csorba and Oszt\'enyi~\cite{csorba2010topological} showed that the topological lower bounds used by Lov\'asz to prove the Kneser Conjecture cannot be used on their own to prove Stahl's Conjecture.
They proved that for $k'\ge\dbinom{n}{k}$, that topological lower bound implies $\chi_{k'}(K(n,k))\ge k'\Bigl\lfloor \dfrac{n}{k}\Bigr\rfloor$ only.

On the other hand, our bounds in (the proofs of) Lemmas~\ref{thm:improve_stahl_large_q} and~\ref{thm:improve_CGJ_large_n} show that for each $n\ge 2k+1$, we only need to determine one $\chi_{k'}(K(n,k))$ with $k'=qk-(k-1)$, where $q\le q_0(n,k)$ can be quite small for large~$n$ compared to~$k$.
Hence it might still be possible that topological methods similar to those Lov\'asz used can prove Stahl's Conjecture for large enough $n$ (depending on~$k$).

\medskip
We like to end with a question about multi-colouring that can be seen as a ``symmetric'' version of Stahl's Conjecture, and which we felt was (almost) equally natural as Question~\ref{main_ques}: ``\emph{For what pairs $(n,k)$ and $(n',k')$ is it the case that every $(n,k)$-colourable graph is also $(n',k')$-colourable and every $(n',k')$-colourable graph is also $(n,k)$-colourable?}''.
The similar question for normal colouring is again trivial to answer: ``\emph{For all $n,n'$ we have that every $n$-colourable graph is $n'$-colourable and every $n'$-colourable graph is $n$-colourable, if and only if $n=n'$}''.

In this case we can give the full answer for multi-colouring as well, although to answer it we needed some knowledge about special cases of Stahl's Conjecture.

\begin{mythm}\label{thm:homo_both_way}\mbox{}\\*
For all integers $n,k,n',k'$, $n\ge2k\ge2$, $n'\ge2k'\ge2$, every $(n,k)$-colourable graph is also $(n',k')$-colourable and every $(n',k')$-colourable graph is also $(n,k)$-colourable, if and only if\\
{\rm (1)} $n=n'$ and $k=k'$, or\\
{\rm (2)} $n=2k$ and $n'=2k'$.
\end{mythm}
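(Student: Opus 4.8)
The plan is to prove the two directions separately, with the ``if'' direction being easy and the ``only if'' direction requiring the known special cases of Stahl's Conjecture. For the ``if'' direction: if $n=n'$ and $k=k'$ the claim is trivial, and if $n=2k$, $n'=2k'$, then both $K(2k,k)$ and $K(2k',k')$ are perfect matchings on an even number of vertices (in fact each is a disjoint union of edges), so a graph is $(2k,k)$-colourable if and only if it is bipartite, and the same for $(2k',k')$-colourable; hence the two classes coincide. (Concretely, $K(2k,k)$ is a matching, so it is $2$-colourable, and conversely any bipartite graph maps to a single edge, hence into $K(2k,k)$.) Recall we are using the standard fact that ``$G$ is $(m,\ell)$-colourable'' is equivalent to ``$G$ admits a homomorphism to $K(m,\ell)$'', so the mutual-containment condition says precisely that $K(n,k)$ and $K(n',k')$ are homomorphically equivalent.

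For the ``only if'' direction, assume $K(n,k)$ and $K(n',k')$ are homomorphically equivalent and that we are not in case (2), so (say) $n>2k$; we must conclude $n=n'$ and $k=k'$. The first step is to compare fractional chromatic numbers: homomorphic equivalence forces $\chi_f(K(n,k))=\chi_f(K(n',k'))$, and it is classical that $\chi_f(K(m,\ell))=m/\ell$, so $n/k=n'/k'$. Hence $(n',k')=(tn_0,tk_0)$ and $(n,k)=(sn_0,sk_0)$ for the primitive pair $(n_0,k_0)$ with $\gcd(n_0,k_0)=1$ and some positive integers $s,t$; since $n>2k$ we have $n_0>2k_0$, so in particular $n_0\ge 2k_0+1$. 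The goal is now to show $s=t$.

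The key step is to extract more than the ratio by using multi-chromatic numbers and Theorem~\ref{thm:eq_indep_bound}. A homomorphism $K(sn_0,sk_0)\to K(tn_0,tk_0)$ shows $K(sn_0,sk_0)$ is $(tn_0,tk_0)$-colourable, i.e.\ $\chi_{tk_0}(K(sn_0,sk_0))\le tn_0$; but Observation~\ref{obs1} gives $\chi_{tk_0}(K(sn_0,sk_0))\ge (tk_0)(sn_0)/(sk_0)=tn_0$, so equality holds, $\chi_{tk_0}(K(sn_0,sk_0))=tn_0$. Since $sn_0\ge 2sk_0+1\ge 2(sk_0)+1$, Theorem~\ref{thm:eq_indep_bound} applies with the pair $(sn_0,sk_0)$ and colour-parameter $k'=tk_0$, and tells us equality in Observation~\ref{obs1} holds \emph{only if $tk_0$ is an integer multiple of $sk_0$}, i.e.\ only if $s\mid t$. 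Running the symmetric argument with the homomorphism in the other direction — using $K(tn_0,tk_0)$ is $(sn_0,sk_0)$-colourable — gives $\chi_{sk_0}(K(tn_0,tk_0))=sn_0$, and (provided $tn_0\ge 2tk_0+1$, which holds since $n_0\ge 2k_0+1$) Theorem~\ref{thm:eq_indep_bound} now forces $t\mid s$. Together $s\mid t$ and $t\mid s$ give $s=t$, hence $(n,k)=(n',k')$, completing this direction. One must also handle the degenerate sub-case where one of the two graphs is edgeless while the other is not: if $n=2k$ but $n'>2k'$ (or vice versa), then $K(n,k)$ is bipartite (even a matching) while $K(n',k')$ has odd cycles — concretely $\chi(K(n',k'))=n'-2k'+2\ge 3$ by the Lov\'asz--Kneser theorem — so there is no homomorphism from $K(n',k')$ to the bipartite graph $K(n,k)$, a contradiction; this rules out the mixed case and is why (1) and (2) cannot overlap partially.

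The main obstacle I anticipate is making the application of Theorem~\ref{thm:eq_indep_bound} airtight regarding its hypotheses: that theorem needs the first parameter to be at least $2$ times the second plus $1$, so one must carefully track which of $(sn_0,sk_0)$, $(tn_0,tk_0)$ satisfies $n_0\ge 2k_0+1$ strictly versus the edgeless boundary $n_0=2k_0$, and invoke the theorem only for the strict pair (the equal-parameter and bipartite boundary cases being disposed of by the elementary arguments above). Everything else — the fractional chromatic number computation, Observation~\ref{obs1}, and the divisibility bookkeeping $s\mid t$, $t\mid s \Rightarrow s=t$ — is routine once the case division is set up correctly.
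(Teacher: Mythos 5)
Your proposal is correct and follows essentially the same route as the paper: the ``if'' direction via bipartiteness of $K(2k,k)$, and the ``only if'' direction by combining the two homomorphism inequalities with Observation~\ref{obs1} to force $n/k=n'/k'$ and equality in both bounds, then invoking Theorem~\ref{thm:eq_indep_bound} in both directions to get mutual divisibility and hence $k=k'$, $n=n'$. Your detour through fractional chromatic numbers and the extra treatment of the ``mixed'' case are harmless but redundant, since the paper's direct derivation of $n'/k'=n/k$ from the two inequalities already yields both.
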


\begin{proof} 
Translated into homomorphisms, we are asking for pairs $(n,k)$ and $(n',k')$ such that there is both a homomorphism $K(n,k)\rightarrow K(n',k')$ and a homomorphism $K(n',k')\rightarrow K(n,k)$

If $n=n'$ and $k=k'$, then clearly there are homomorphisms $K(n,k)\rightarrow K(n',k')$ and $K(n',k')\rightarrow K(n,k)$.\\
If $n=2k$ and $n'=2k'$, then both $K(n,k)$ and $K(n',k')$ are bipartite.
We can easily map any bipartite graph to any graph with at least one edge, by mapping all vertices in one part of the bipartition to one endvertex of an edge, and all vertices in the other part to the other endvertex of that same edge.

For the reverse implication, assume that we have homomorphisms $K(n,k)\rightarrow K(n',k')$ and $K(n',k')\rightarrow K(n,k)$.
This means that $\chi_{k'}(K(n,k))\le n'$ and $\chi_k(K(n',k'))\le n$.
On the other hand, Observation~\ref{obs1} gives $\chi_{k'}(K(n,k))\ge\dfrac{k'n}{k}$ and $\chi_k(K(n',k'))\ge\dfrac{kn'}{k'}$.
So we have both $\dfrac{k'n}{k}\le n'$ and $\dfrac{kn'}{k'}\le n$.
This leads to the equality $\dfrac{n'}{k'}=\dfrac{n}{k}$ and in particular gives that both $\chi_{k'}(K(n,k))=\dfrac{k'n}{k}$ and $\chi_k(K(n',k'))=\dfrac{kn'}{k'}$.
If conclusion~(2) does not hold, we must have $\dfrac{n'}{k'}=\dfrac{n}{k}>2$.
We obtain $n'= \dfrac{k'n}{k}>2k'$ and $n=\dfrac{kn'}{k'}>2k$. 
Then Theorem~\ref{thm:eq_indep_bound} gives that~$k'$ is a multiple of $k$ and $k$ is a multiple of $k'$.
That means $k=k'$, hence also $n=n'$, and so conclusion~(1) holds.
\end{proof}

\small
\bibliographystyle{abbrv}
\bibliography{references.bib}
\end{document}